\theoremstyle{plain}
\newtheorem{thm}{Theorem}[section]
\newtheorem{lem}[thm]{Lemma}
\theoremstyle{definition}
\newtheorem{ques}[thm]{Question}
\theoremstyle{remark}
\newtheorem{rmk}{Remark}[section]
\newtheorem*{ac}{Acknowledgements}
\newcommand{\zz}{\mathbb{Z}}
\newcommand{\rr}{\mathbb{R}}
\newcommand{\yoemph}[1]{\emph{#1}}
\newcommand{\supp}{\text{{\rm supp}}}
\newcommand{\yosub}{\subseteq}
\newcommand{\yocmet}[2]{\mathrm{UCPM}(#1, #2)}
\newcommand{\yocpm}[1]{\mathrm{CPM}(#1)}
\newcommand{\met}[1]{\mathrm{Met}(#1)}
\DeclareMathOperator{\metdis}{\mathcal{D}}
\newcommand{\ult}[2]{\mathrm{UMet}(#1; #2)}
\DeclareMathOperator{\umetdis}{\mathcal{UD}}
\newcommand{\compm}[1]{\mathrm{Comp}(#1)}
\newcommand{\ucompm}[2]{\mathrm{UComp}(#1; #2)}
\newcommand{\yocover}[1]{\mathcal{#1}}
\newcommand{\yoellsp}[1]{\ell^{1}(#1)}
\newcommand{\yoelldis}[1]{\|#1\|_{\ell^{1}}}
\newcommand{\yoles}[1]{\mathrm{L}(#1)}
\newcommand{\youles}[2]{\mathrm{UL}(#1; #2)}
\newcommand{\yobasis}{\mathbf{e}}
\newcommand{\yoiset}[1]{\mathrm{I}(#1)}
\newcommand{\youiset}[2]{\mathrm{UI}(#1, #2)}
\newcommand{\yocase}[2]{Case #1.~[#2]:}
\begin{document}

%title

\title[Spaces of metrics are Baire]
{
Spaces of metrics are Baire
}

%author
\author[Yoshito Ishiki]
{Yoshito Ishiki}

%adress
\address[Yoshito Ishiki]
{\endgraf
Department of Mathematical Sciences
\endgraf
Tokyo Metropolitan University
\endgraf
Minami-osawa Hachioji Tokyo 192-0397
\endgraf
Japan
}

%email
\email{ishiki-yoshito@tmu.ac.jp}

%date

\date{\today}

\subjclass[2020]{54E35, 54E50, 54E52, 54D20}

\keywords{Space of metrics, Baire category, Complete metrics, Paracompactness}

\begin{abstract}
For a metrizable space, 
we consider the space of all metrics 
generating the same topology of the metrizable space, 
and 
 this space of metrics is equipped with 
the supremum metric. 
In this paper, 
for every metrizable space, 
we establish 
 that 
the space of metrics on the metrizable space is  Baire. 
We also 
show that 
the set of all complete metrics 
is  comeager in the space of metrics. 
Moreover, we investigate non--Archimedean analogues of 
these results. 
\end{abstract}

%%%%%%%%%%%%%%%%%%%%%%%%%%%%%%%%%%%%%%%%%%%
%Start!
\maketitle
%%%%%%%%%%%%%%%%%%%%%%%%%%%%%%%%%%%%%%%%%%

\section{Introduction}\label{sec:intro}
\subsection{Backgrounds}

A subset 
$S$ 
of a topological space
 $M$
 is said to be 
 \yoemph{comeager}
 or 
 \yoemph{residual}
 if 
 there exists 
 a sequence 
 $\{G_{n}\}_{n\in \zz_{\ge 0}}$
 of  open 
 dense 
 subsets of 
 $M$
  such that 
  $\bigcap_{n\in \zz_{\ge 0}}G_{n}\yosub S$. 
A non-empty topological space 
 $X$ 
 is 
 \yoemph{Baire} if 
 every comeager subset of 
 $X$
  is 
 dense in 
 $X$. 
The concept of Baire spaces 
goes back to Baire's paper 
\cite{baire1899fonctions}, 
and 
provides
a 
powerful 
framework 
 to 
verify 
the 
 denseness of subsets of a space.
For instance, 
Banach and Mazurkiewicz's  method 
\cite{Banach1931, Mazurkiewicz1931}, 
which demonstrates  the existence
and denseness  of 
nowhere differentiable continuous functions 
using Baire spaces, 
 is 
 a
notable 
and 
succinct 
illustration
  of the 
  efficacy 
  of Baire spaces. 
%For more applications, we refer the readers to \cite{aa}.  

In this paper, 
  for a   metrizable topological space
  $X$, 
we consider 
the space 
$\yocpm{X}$ 
of 
all continuous pseudometrics on
 $X$, and 
the space
$\met{X}$ 
of all metrics 
generating the same topology of
 $X$. 
These  spaces are equipped with 
the supremum metric
$\metdis_{X}$. 
For a metrizable space 
$X$, 
and for an open cover 
$\yocover{C}$ 
of 
$X$, 
we first  show the openness and denseness of 
the set of all 
$d\in \yocpm{X}$
 such that there exists a Lebesgue number 
 of
 $\yocover{C}$
  with respect to 
  $d$
  (Theorem \ref{thm:main00}). 
 As a consequence, 
 for a fixed metric $w\in \yocpm{X}$, 
 we prove the denseness and $G_{\delta}$-ness of  the set of all $d\in \yocpm{X}$
 such that  $1_{X}\colon (X, d)\to (X, w)$ is uniformly continuous
 (Theorem \ref{thm:mainiset}). 
 As applications of this theorem, 
for every metrizable space
$X$, 
we establish 
 that 
the space
$(\met{X}, \metdis_{X})$ of metrics on the metrizable space is  Baire
(Theorem \ref{thm:main1baire}). 
We also 
show that 
the set of all complete metrics 
is  comeager in  $\met{X}$
(Theorem \ref{thm:maincomp}). 
Moreover, we investigate non--Archimedean analogues of 
these results. 

Since  
$(\met{X}, \metdis_{X})$ 
is a moduli space of 
metrics on 
$X$, 
our main results provide a method  for demonstrating the 
existence and abundance  of special metrics on 
$X$
through the theory of Baire spaces. 
Specifically, 
we can ascertain the variety of 
``geometries'' 
that can be developed on  on $X$. 
From the  point of view of 
analogues  between measures and Baire's
 categories as mentioned in Oxtoby's book \cite{MR0584443}, 
by the help of our main result 
(Theorem \ref{thm:main1baire})
asserting that $\met{X}$ is Baire, 
 the author's works
 (\cite{Ishiki2020int}, 
\cite{Ishiki2021ultra}, 
\cite{Ishiki2021dense}, 
\cite{MR4527953}, 
\cite{Ishiki2023disco},
and 
\cite{Ishiki2023sr})
 on spaces of metrics, 
 including 
the present paper,   
can 
be considered as  a counterpart of 
Vershik's works
(\cite{MR1691182}
and 
\cite{MR2086637}) on 
measures on the set of metrics.

Next
we briefly review  research on 
spaces of metrics. 
In 1944, 
Shanks
\cite{MR0010962} 
considered  
spaces of metrics on compact metrizable spaces, 
 and 
established 
a
  Banach--Stone--Eilenberg  type theorem 
stating  that for every pair $X$ and $Y$ of compact metrizable spaces, 
 $\met{X}$ is  congruent to $\met{Y}$ if and only if 
 $X$ is 
  homeomorphic to  $Y$(\cite[Theorem 3.2]{MR0010962}). 
  
In the 1990s, 
some 
authors
investigated 
spaces of  all possible  metrics on 
given 
sets
(\cite{MR1205515}, 
\cite{MR1268864}, 
\cite{MR1198097}, 
\cite{vceretkova1997certain}, 
and 
\cite{MR1407287}). 
Remark that 
this 
 space of metrics   depends only on 
the cardinality  of an  underlying  set. 

Starting in  2020, 
in contrast, 
the author of the present paper considered 
the set of topological metrics; namely, 
 for a metrizable space 
 $X$, 
the space 
$\met{X}$ 
of metrics 
 generating the same topology of 
$X$ equipped with the supremum distance
$\metdis_{X}$. 
Although 
it was not known whether 
$\met{X}$ is Baire or not, 
the author 
clarified 
 the denseness and
  Borel hierarchy of 
 a subset 
 $\{\, d\in \met{X}\mid \text{$(X, d)$ satisfies $\mathcal{P}$}\, \}$
 for 
a certain  property
$\mathcal{P}$
  on metric spaces, 
and proved that 
some  subsets are comeager in 
$\met{X}$
(\cite{Ishiki2020int}, 
\cite{Ishiki2021ultra}, 
\cite{Ishiki2021dense}, 
\cite{MR4527953}, 
\cite{Ishiki2023disco},
and 
\cite{Ishiki2023sr}). 
For example, 
the author \cite{Ishiki2020int}
showed that 
the set of all 
metrics in $\met{X}$ having Assouad dimension $\infty$ is 
dense and $G_{\delta}$, in particular,  it is Baire. 
%%%%

In the context of 
Lipschitz-free metric spaces, 
there are several works on 
spaces of metrics
(see 
\cite{smith2023lipschitz}, 
\cite{MR4641123}, 
and 
\cite[Problem 6.6]{MR3518958}).

As applications of infinite-dimensional topology, 
Koshino 
researched  
topological shapes of spaces of  metrics
equipped with 
not only the uniform topologies 
 but also the compact-open topologies
(\cite{MR4586584}, 
\cite{koshino2022topological}, 
and 
\cite{koshino2024borel}).

%Ishiki hogehoge

 \subsection{Main results}
Before stating our main results, 
we introduce some notions and notations. 
For a set 
$X$, 
 a map 
 $d\colon X\times X\to [0, \infty)$
 is called a 
 \yoemph{pseudometric} if 
 the following conditions are true:
\begin{enumerate}
\item 
for every 
$x\in X$, 
we have 
$d(x, x)=0$; 
\item 
for every pair  
$x, y\in X$, 
we have 
$d(x, y)=d(y, x)$; 
\item 
for every triple 
$x, y, z\in X$, 
we have 
$d(x, y)\le d(x, z)+d(z, y)$. 
\end{enumerate}
 A pair 
 $(X, d)$ is called a 
 \emph{pseudometric space}. 
If the equality
 $d(x, y)=0$ 
 implies 
 $x=y$, 
then 
 $d$ 
 is called  a 
 \yoemph{metric}.

For a 
topological space 
$X$, 
we denote by 
$\yocpm{X}$
 the set of all 
 continuous 
 maps $d\colon X\times X\to [0, \infty)$
such that 
$d$ 
is a pseudometric on  
$X$. 
As mentioned in the previous subsection, 
we also denote by 
$\met{X}$
the set of all 
metrics $d$ on $X$
generating 
the same topology of
$X$. 
Notice that 
$\met{X}\yosub \yocpm{X}$. 
Of course, 
$X$ is metrizable if and only if 
$\met{X}\neq \emptyset$. 
We define 
$\metdis_{X}\colon \yocpm{X}^{2}\to [0, \infty]$
by 
$\metdis_{X}(d, e)=\sup_{x, y\in X}|d(x, y)-e(x, y)|$. 
Note that 
although 
$\metdis_{X}$ 
can take the value 
$\infty$, 
we can define the topology induced by 
$\metdis_{X}$  
using open balls, 
as in the cases of ordinary metrics. 
In this paper, 
we represent
the restricted metric  
$\metdis_{X}|_{\met{X}^{2}}$ as 
the original  symbol 
$\metdis_{X}$. 
In what follows, 
we consider that 
$\yocpm{X}$
 and 
 $\met{X}$
  are equipped with 
  the topologies induced by 
  $\metdis_{X}$, which represent the uniform convergence of metrics.

 For a pseudometric space 
 $(X, d)$, for a point $x\in X$, 
 and for $r\in (0, \infty)$, 
 we denote by 
 $U(x, r; d)$ 
 the open ball 
 centered at $x$
 with radius $x$ of $(X, d)$, 
 i.e., 
 $U(x, r; d)
 =\{\, p\in X\mid d(x, p)<r\, \}$. 
 
 For a pseudometric space 
$(X, d)$, 
and a 
covering
$\yocover{C}=\{C_{i}\}_{i\in I}$
 of 
$X$, 
we say that 
a positive real number 
$r\in (0, \infty)$ is 
a
\yoemph{Lebesgue number
of 
$\yocover{C}$}
if 
for every 
$x\in X$
there exists 
$i\in I$
 such that 
 $U(x, r; d)\yosub C_{i}$.

For a topological space 
$X$, 
and for a 
covering 
$\yocover{C}$
 of
  $X$, 
we denote by 
$\yoles{\yocover{C}}$
the set of all 
$d\in \yocpm{X}$
such that 
$\yocover{C}$
has a 
(positive)
Lebesgue number 
with respect $d$. 

Our first result 
states that
$\yoles{\yocover{C}}$ 
is open and dense in 
the space of 
pseudometrics. 

\begin{thm}\label{thm:main00}
Let 
$X$ be a 
paracompact Hausdorff space, 
and 
$\yocover{C}$
an open 
covering of 
$X$. 
Then 
$\yoles{\yocover{C}}$
 is 
open
and 
dense in 
$\yocpm{X}$. 
\end{thm}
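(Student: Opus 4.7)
The plan is to prove openness and denseness of $\yoles{\yocover{C}}$ separately; openness follows from a short triangle-inequality estimate, while denseness requires building an auxiliary pseudometric from paracompactness data. For openness, given $d \in \yoles{\yocover{C}}$ with Lebesgue number $r > 0$, I would take any $e \in \yocpm{X}$ with $\metdis_{X}(d, e) < r/2$. For each $x \in X$, pick $C \in \yocover{C}$ with $U(x, r; d) \yosub C$; any $y$ with $e(x, y) < r/2$ satisfies $d(x, y) < r$, hence $U(x, r/2; e) \yosub C$. Thus the open ball of radius $r/2$ around $d$ lies in $\yoles{\yocover{C}}$.

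For denseness, fix $d \in \yocpm{X}$ and $\epsilon > 0$. My plan is to construct an auxiliary $\rho \in \yocpm{X}$, bounded above by $1$, such that $\yocover{C}$ has Lebesgue number $1$ with respect to $\rho$. Setting $e := d + (\epsilon/2)\rho$, the resulting continuous pseudometric satisfies $\metdis_{X}(d, e) \le \epsilon/2 < \epsilon$, and from $e \ge (\epsilon/2)\rho$ one deduces that $\epsilon/2$ is a Lebesgue number of $\yocover{C}$ with respect to $e$, placing $e$ in $\yoles{\yocover{C}}$.

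To construct $\rho$, I would use paracompactness to choose a locally finite open refinement $\{V_{i}\}_{i \in I}$ of $\yocover{C}$. Since a paracompact Hausdorff space is normal, the shrinking lemma furnishes an open cover $\{W_{i}\}_{i \in I}$ of $X$ with $\overline{W_{i}} \yosub V_{i}$, and Urysohn's lemma provides continuous $f_{i} \colon X \to [0, 1]$ with $f_{i} \equiv 1$ on $\overline{W_{i}}$ and $f_{i} \equiv 0$ on $X \setminus V_{i}$. Define
\[
\rho(x, y) := \sup_{i \in I} |f_{i}(x) - f_{i}(y)|.
\]
Local finiteness of $\{V_{i}\}$ implies that in a neighborhood of any $(x_{0}, y_{0}) \in X^{2}$ only finitely many $f_{i}$ are nonzero, so the supremum reduces locally to a maximum of finitely many continuous functions, giving continuity of $\rho$; the pseudometric axioms pass through a supremum of pseudometrics bounded by $1$. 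For the Lebesgue property: for every $x \in X$ some $i_{0}$ satisfies $x \in W_{i_{0}}$, so $f_{i_{0}}(x) = 1$, and any $y$ with $\rho(x, y) < 1$ satisfies $f_{i_{0}}(y) > 0$, hence lies in $V_{i_{0}}$ and thus in some element of $\yocover{C}$.

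The main obstacle is the construction of $\rho$. The natural-looking alternative of using a subordinate partition of unity $\{\phi_{i}\}$ and the pseudometric $\sum_{i} |\phi_{i}(x) - \phi_{i}(y)|$ produces a Lebesgue number controlled by a positive lower bound on $x \mapsto \max_{i} \phi_{i}(x)$, which may fail to exist when $X$ is non-compact. Using Urysohn bumps that actually attain the value $1$ on the shrinking $\{W_{i}\}$ bypasses this difficulty and yields a uniform Lebesgue number independent of $x$.
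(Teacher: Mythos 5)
Your proposal is correct, and its overall architecture coincides with the paper's: openness via the same triangle-inequality estimate (your choice $\epsilon=r/2$ versus the paper's $r'=r-\epsilon$ is immaterial), and denseness by adding to an arbitrary $d$ a small multiple of a bounded auxiliary pseudometric admitting a uniform Lebesgue number. The genuine difference lies in how that auxiliary pseudometric is produced. The paper isolates this as a separate lemma (its Theorem \ref{thm:les}): it takes a locally finite partition of unity $\{g_{a}\}$ subordinated to $\yocover{C}$, renormalizes it by $f_{a}=\tfrac{2}{\phi}\min\{g_{a},\phi/2\}$ with $\phi=\sup_{a}g_{a}$ so that the sets $f_{a}^{-1}(1)$ cover $X$ while the supports remain a locally finite refinement, and then uses the $\ell^{1}$-valued map $\psi(x)=\sum_{a}f_{a}(x)\yobasis_{a}$ to define $D(x,y)=\yoelldis{\psi(x)-\psi(y)}$, which has Lebesgue number $1$; it then truncates at $1$ and perturbs. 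You instead invoke normality of a paracompact Hausdorff space, the shrinking lemma, and Urysohn's lemma to get bumps $f_{i}$ equal to $1$ on a shrinking $\overline{W_{i}}$ and vanishing off $V_{i}$, and take the supremum pseudometric $\rho(x,y)=\sup_{i}|f_{i}(x)-f_{i}(y)|$; your local-finiteness argument for continuity and the Lebesgue-number argument are both sound. The two constructions rest on the same key observation (the level-$1$ sets of the bumps cover $X$ while their supports refine $\yocover{C}$), and the obstacle you flag for the naive $\sum_{i}|\phi_{i}(x)-\phi_{i}(y)|$ with an unnormalized partition of unity is exactly what the paper's renormalization by $\phi$ is designed to fix. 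What each route buys: yours avoids the $\ell^{1}$ embedding and the renormalization trick at the price of using the shrinking lemma and Urysohn's lemma (i.e., normality) explicitly; the paper's version needs only the existence of a subordinated locally finite partition of unity and directly matches the classical references it cites, and its formulation as a standalone nonemptiness lemma is reused verbatim in the non-Archimedean case.
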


Let $X$ be a metrizable space, 
and $w\in \yocpm{X}$. 
We define 
$\yoiset{w}$
the set of all 
$d\in \yocpm{X}$
such that 
$1_{X}\colon (X, d)\to (X, w)$ is 
uniformly continuous, 
where 
$1_{X}$
 stands for the identity map. 
Namely, 
$d\in \yoiset{w}$
if and only if for every 
$\epsilon\in (0, \infty)$, 
there exists 
$\delta\in (0, \infty)$ such that 
for every pair 
$x, y\in X$, 
the inequality
 $d(x, y)<\delta$
  implies 
$w(x, y)<\epsilon$. 
As a consequence of Theorem 
\ref{thm:main00}, we
prove that 
$\yoiset{w}$ 
is comeager in 
$\yocpm{X}$
(compare with the proof of
 \cite[Proposition 3]{MR4586584}). 
 
 %%%
\begin{thm}\label{thm:mainiset}
Let $X$ be a metrizable space, 
and $w\in \yocpm{X}$. 
Then 
the set 
$\yoiset{w}$ 
is 
comeager in 
$\yocpm{X}$. 
\end{thm}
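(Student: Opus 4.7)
The plan is to express $\yoiset{w}$ as a superset of a countable intersection of open dense sets, each of the form $\yoles{\yocover{C}_{n}}$, and then invoke Theorem~\ref{thm:main00}. For this we need a sequence of open covers of $X$ whose associated Lebesgue number conditions capture uniform continuity of $1_{X}\colon (X,d)\to (X,w)$.

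For each positive integer $n$, I would consider the cover
$$
\yocover{C}_{n}
=\bigl\{\,U(x, 1/(2n); w) \;\bigm|\; x\in X\,\bigr\}
$$
of $X$. Since $w\in \yocpm{X}$ is continuous, each member of $\yocover{C}_{n}$ is open in $X$, and clearly the family covers $X$. Because $X$ is metrizable it is paracompact Hausdorff (Stone's theorem), so Theorem~\ref{thm:main00} applies and each $\yoles{\yocover{C}_{n}}$ is open and dense in $\yocpm{X}$.

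The key step is to verify the inclusion
$$
\bigcap_{n\in \nn}\yoles{\yocover{C}_{n}}\yosub \yoiset{w}.
$$
Take $d$ in the left-hand side and $\epsilon>0$. Pick $n$ with $1/n<\epsilon$, and let $\delta>0$ be a Lebesgue number of $\yocover{C}_{n}$ with respect to $d$. If $d(x,y)<\delta$, then $x,y\in U(x,\delta;d)$, which by the Lebesgue number property is contained in some $U(z, 1/(2n); w)$. Two applications of the triangle inequality for $w$ then give $w(x,y)<1/n<\epsilon$, showing that $1_{X}\colon (X,d)\to (X,w)$ is uniformly continuous.

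Combining these two ingredients, $\yoiset{w}$ contains a countable intersection of open dense subsets of $\yocpm{X}$ and is therefore comeager. I don't foresee any serious obstacle beyond choosing the radii $1/(2n)$ correctly so that the triangle-inequality bound lands inside $\epsilon$; the density/openness is already packaged by Theorem~\ref{thm:main00}, and Stone's theorem guarantees its hypotheses are met.
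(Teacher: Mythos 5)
Your proposal is correct and follows essentially the same route as the paper: covering $X$ by small $w$-balls, applying Theorem~\ref{thm:main00} (via Stone's theorem on paracompactness of metrizable spaces) to get open dense sets $\yoles{\yocover{C}_{n}}$, and showing their intersection lies in $\yoiset{w}$ by the Lebesgue-number plus triangle-inequality argument. The only differences are cosmetic (radii $1/(2n)$ instead of $2^{-n-1}$, and bounding $w(x,y)$ directly rather than via a ball inclusion).
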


In the author's preprint 
\cite[Lemma 5.1]{Ishiki2020int}, 
it was shown  that 
the space 
$\met{X}$ is completely metrizable, 
(especially, Baire) 
 under the assumption that  
$X$ is second-countable and locally compact Hausdorff. 
Moreover, 
in \cite[Proposition 3]{MR4586584}, 
Koshino proved that 
 the set of all bounded metric $d\in \met{X}$
is completely metrizable under only the condition that 
$X$ is metrizable and $\sigma$-compact.
This proof is still  effective  for  $\met{X}$. 
Thus, 
Koshino's result  is a generalization of 
the author's lemma 
\cite[Lemma 5.1]{Ishiki2020int}. 
As a further generalization of these works, 
using  
Theorem 
\ref{thm:mainiset},
we obtain 
the following theorem 
(Theorem \ref{thm:main1baire}), 
 which 
 states that 
the space
$\met{X}$ 
of metrics is Baire. 

\begin{thm}\label{thm:main1baire}
Let
 $X$ 
 be a 
 metrizable space. 
Then 
$\met{X}$ is
comeager in 
$(\yocpm{X}, \metdis_{X})$. 
In particular, 
the space 
$(\met{X}, \metdis_{X})$ 
is 
Baire. 
\end{thm}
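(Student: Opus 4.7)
The plan is to locate, inside $\met{X}$, a subset already known to be comeager in the ambient space $\yocpm{X}$, and then transfer the Baire property from a concrete dense $G_{\delta}$ witness to $\met{X}$.

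First I would check that $(\yocpm{X}, \metdis_{X})$ is itself a Baire space. A $\metdis_{X}$-Cauchy sequence of continuous pseudometrics converges uniformly on $X \times X$ to a continuous map (uniform limit of continuous functions), and each pseudometric axiom is preserved under pointwise limits, so the limit lies in $\yocpm{X}$. Decomposing $\yocpm{X}$ into the equivalence classes on which $\metdis_{X}$ takes only finite values gives a disjoint union of honest complete metric spaces, which is Baire. Since $X$ is metrizable, I would then fix some $w_{0}\in\met{X}$ for use throughout.

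Next I would establish the key inclusion $\yoiset{w_{0}}\yosub \met{X}$. For $d\in\yoiset{w_{0}}$, if $d(x,y)=0$ then uniform continuity of $1_{X}\colon (X,d)\to (X,w_{0})$ forces $w_{0}(x,y)<\epsilon$ for every $\epsilon>0$, so $w_{0}(x,y)=0$ and hence $x=y$ because $w_{0}$ is a metric; thus $d$ separates points. The continuity of $d$ on $X\times X$ makes $1_{X}\colon X\to (X,d)$ continuous, so the $d$-topology is no finer than the topology of $X$; conversely, uniform continuity of $1_{X}\colon (X,d)\to (X,w_{0})$ together with $w_{0}\in\met{X}$ shows every open set of $X$ is $d$-open. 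Hence the $d$-topology coincides with that of $X$ and $d\in\met{X}$. By Theorem \ref{thm:mainiset}, $\yoiset{w_{0}}$ is comeager in $\yocpm{X}$, and therefore so is the larger set $\met{X}$, settling the first half of the statement.

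For the Baire conclusion, the main obstacle is that comeagerness in the ambient space does not in general confer Baireness on a subspace. I would bypass this by producing an explicit dense $G_{\delta}$ subset $G$ of $\yocpm{X}$ with $G\yosub \yoiset{w_{0}}\yosub \met{X}$, available from the very definition of comeagerness together with the Baire property of $\yocpm{X}$. By Alexandrov's theorem, $G$ is completely metrizable as a $G_{\delta}$ subset of the complete metric space $\yocpm{X}$, hence Baire in its own right, and it is dense and $G_{\delta}$ in $\met{X}$ as well. Given any sequence $\{V_{n}\}_{n\in\mm}$ of open dense subsets of $\met{X}$, each intersection $V_{n}\cap G$ is open in $G$, and is dense in $G$ because $G$ is dense in $\met{X}$: any non-empty relatively open $U=G\cap W$ of $G$ meets $V_{n}\cap W$ inside $\met{X}$, which in turn meets $G$ by density. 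Baireness of $G$ then gives that $G\cap\bigcap_{n} V_{n}$ is dense in $G$, hence dense in $\met{X}$, so $\bigcap_{n} V_{n}$ is dense in $\met{X}$, proving it is Baire.
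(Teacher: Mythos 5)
Your argument is correct, and its first half is exactly the paper's: you prove $\yoiset{w_{0}}\yosub\met{X}$ for a fixed $w_{0}\in\met{X}$ (this is the paper's Lemma \ref{lem:gentop}; your version is in fact a bit more careful, since you explicitly check that $d$ separates points) and then invoke Theorem \ref{thm:mainiset} to get comeagerness of $\met{X}$ in $\yocpm{X}$. Where you diverge is the final step. The paper simply cites Lemma \ref{lem:comeager} (every comeager subset of a Baire space is itself Baire, with a reference to Haworth--McCoy) together with Lemma \ref{lem:daiji}; your worry that ``comeagerness in the ambient space does not in general confer Baireness on a subspace'' is unfounded once the ambient space is Baire --- that is precisely what Lemma \ref{lem:comeager} asserts --- so your detour effectively re-proves this lemma in the metric setting: you extract from the definition of comeager a dense $G_{\delta}$ set $G\yosub\yoiset{w_{0}}\yosub\met{X}$, apply Alexandrov's theorem to see $G$ is completely metrizable (to be scrupulous, Alexandrov should be applied on each clopen finite-$\metdis_{X}$-distance class, or after truncating the metric, the same device you already use to handle the value $\infty$), and then use the standard fact that a space with a dense Baire subspace is Baire. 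This is a perfectly valid and more self-contained finish, at the cost of redoing work the paper delegates to a general lemma; the paper's route is shorter and isolates the purely topological fact once and for all, which it then reuses verbatim in the non-Archimedean case (Theorem \ref{thm:mainnabaire}).
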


For a metrizable space 
$X$, 
we denote by 
$\compm{X}$
the set of all complete
 metrics
 in 
$\met{X}$. 
In the next result, 
by 
Theorem 
\ref{thm:mainiset}, 
we show that if $X$ is 
completely metrizable, 
then 
 $\compm{X}$
  is comeager in 
  $\met{X}$. 
In other words, 
generic  metrics on $X$ are
  complete. 
\begin{thm}\label{thm:maincomp}
Let 
$X$
 be a completely metrizable
 space. 
 Then 
 $\compm{X}$
 is comeager in 
 $(\yocpm{X}, \metdis_{X})$. 
Moreover, 
 the set
  $\compm{X}$
 is also  comeager in 
 $(\met{X}, \metdis_{X})$. 
 \end{thm}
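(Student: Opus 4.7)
The plan is to identify $\compm{X}$ with (a subset of) the intersection of the two comeager sets produced by Theorems \ref{thm:mainiset} and \ref{thm:main1baire}; the ``completely metrizable'' hypothesis is used only to supply a reference metric. Since $X$ is completely metrizable, I fix a complete $w\in\met{X}$. The central claim is the inclusion
\[
\met{X}\cap\yoiset{w}\ \yosub\ \compm{X}.
\]
To verify it, take $d\in\met{X}\cap\yoiset{w}$ and a $d$-Cauchy sequence $(x_{n})$. Uniform continuity of $1_{X}\colon(X,d)\to(X,w)$ (the defining property of $\yoiset{w}$) makes $(x_{n})$ a $w$-Cauchy sequence, hence $w$-convergent to some $x\in X$ by completeness of $w$; since $d$ and $w$ both generate the topology of $X$, the sequence also $d$-converges to $x$, so $d\in\compm{X}$.

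Once this inclusion is established, the rest is Baire-category bookkeeping. Theorem \ref{thm:mainiset} supplies open dense sets $A_{n}\yosub\yocpm{X}$ with $\bigcap_{n}A_{n}\yosub\yoiset{w}$, and Theorem \ref{thm:main1baire} supplies open dense sets $B_{n}\yosub\yocpm{X}$ with $\bigcap_{n}B_{n}\yosub\met{X}$. Each $A_{n}\cap B_{n}$ is open and dense, since the intersection of two open dense subsets of any topological space is again open dense. Therefore
\[
\bigcap_{n}(A_{n}\cap B_{n})\ \yosub\ \met{X}\cap\yoiset{w}\ \yosub\ \compm{X},
\]
exhibiting $\compm{X}$ as comeager in $\yocpm{X}$.

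For the second assertion, I use that $(\yocpm{X},\metdis_{X})$ is a complete extended metric space and hence Baire, so its comeager subset $\met{X}$ is dense in $\yocpm{X}$. Given any nonempty open $V\yosub\met{X}$, write $V=U\cap\met{X}$ with $U$ open in $\yocpm{X}$; then $U$ is nonempty, $U\cap A_{n}\cap B_{n}$ is nonempty and open in $\yocpm{X}$, and the density of $\met{X}$ places a point of this set inside $V$. Hence each $A_{n}\cap B_{n}\cap\met{X}$ is open dense in $\met{X}$, and the intersection $\bigcap_{n}(A_{n}\cap B_{n}\cap\met{X})$ lies in $\met{X}\cap\yoiset{w}\yosub\compm{X}$, proving comeagerness in $(\met{X},\metdis_{X})$. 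The only step carrying genuine content is the key inclusion of the first paragraph; in particular, choosing $w$ inside $\met{X}$ (and not merely as a complete element of $\yocpm{X}$) is essential, because it is topological equivalence of $d$ and $w$ that converts $w$-convergence back into $d$-convergence.
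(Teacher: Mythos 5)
Your proof is correct and follows essentially the same route as the paper: fix a complete $w\in\met{X}$, transfer $d$-Cauchy sequences through the uniformly continuous identity $1_{X}\colon(X,d)\to(X,w)$ and back via topological equivalence, and then invoke Theorem \ref{thm:mainiset} (the paper obtains $\yoiset{w}\yosub\met{X}$ from Lemma \ref{lem:gentop} rather than intersecting with $\met{X}$, an immaterial difference). Your explicit relativization argument showing comeagerness of $\compm{X}$ in $(\met{X},\metdis_{X})$ is a welcome addition, since the paper leaves that ``moreover'' step implicit.
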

 \begin{rmk}\label{rmk:nth}
 As shown in 
 \cite{NT1928} (see also \cite{Hausdorff1938}), 
 for every metrizable space $X$,
 we have $\met{X}=\compm{X}$ if and only if 
 $X$ is compact. 
 \end{rmk}

We also obtain 
non-Archimedean analogues of 
aforementioned theorems. 
Let us review ultrametrics (non-Archimedean metrics). 

A pseudometric $d\colon X\times X\to [0, \infty)$
is said to be 
a \yoemph{pseudo-ultrametric} or 
\yoemph{non-Archimedean pseudometric} if 
$d$ satisfies the so-called the strong triangle 
inequality 
$d(x, y)\le d(x, z)\lor d(z, y)$ for all $x, y, z\in X$, 
where the symbol 
``$\lor$'' means the maximum operator on $\rr$, i.e., 
$x\lor y=\max\{x, y\}$. 
A pair
 $(X, d)$ 
 is 
 called a \yoemph{pseudo-ultrametric space}. 
A pseudo-ultrametric $d$ on $X$ is 
called an \yoemph{ultrametric} or 
\yoemph{non-Archimedean metric} if 
the equality $d(x, y)=0$ implies $x=y$. 
Of course, every ultrametric is a metric.

A set $R$ is said to be 
a \yoemph{range set} if 
$R\yosub [0, \infty)$ 
and 
$0\in R$. 
We say that 
a range set $R$ is
\yoemph{characteristic} if 
for every $z\in (0, \infty)$, 
there exists 
$r\in R\setminus \{0\}$ such that 
$r< z$. 
This condition is equivalent to 
$\inf(R\setminus \{0\})=0$. 
A metric $d$ on $X$ is said to be 
\yoemph{$R$-valued}
 if 
$d(x, y)\in R$ for all 
$x, y\in X$.

For a topological space 
$X$, 
and for a range set 
$R$, 
we denote by 
$\yocmet{X}{R}$
the all
$R$-valued 
 continuous maps 
$d\colon X\times X\to [0, \infty)$
for which 
$d$ is a pseudo-ultrametric on $X$. 
We also denote by 
$\ult{X}{R}$
the all $R$-valued 
ultrametrics $d$ on $X$. 
Notice that 
$\ult{X}{R}\yosub \yocmet{X}{R}$. 
When considering 
non-Archimedean analogues, 
it is often more effective to limit the 
range of metrics (see, for example,  \cite{MR2754373}).
Namely, we investigate 
not only ($[0, \infty)$-valued) ultrametrics but also 
$R$-valued ultrametrics for an arbitrary  range set $R$.

For a range set $R$, 
a topological space $X$ is said to be 
\yoemph{
$R$-valued ultrametrizable} if 
$\ult{X}{R}\neq \emptyset$. 
When $R=[0, \infty)$, 
the space $X$ is simply said to be 
\yoemph{ultrametrizable}. 

\begin{rmk}
In  
  \cite[Proposition 2.14]{Ishiki2021ultra}, 
  it was shown that 
$X$ is ultrametrizable if and only if 
for every characteristic range set 
$R$, the space 
$X$ is 
$R$-valued ultrametrizable
($\ult{X}{R}\neq \emptyset$). 
\end{rmk}

We define 
$\umetdis_{X}^{R}\colon \yocmet{X}{R}^{2}\to  [0, \infty]$
by declaring that 
$\umetdis_{X}^{R}(d, e)$ is 
the infimum of all $\epsilon\in R$ such that 
$d(x, y)\le e(x, y)\lor \epsilon$ and 
$e(x, y)\le d(x, y)\lor \epsilon$
for all $x, y\in X$. 
Then 
$\umetdis_{X}^{R}$
 is an ultrametric on 
$\yocmet{X}{R}$ taking values in $[0, \infty]$. 
Similarly to $\metdis_{X}$, 
we can define the topology induced by 
$\umetdis_{X}^{R}$
using open balls.
In this paper, 
we represent 
the restricted metric 
$\umetdis_{X}^{R}|_{\ult{X}{R}^{2}}$
as the original  symbol 
$\umetdis_{X}^{R}$. 
In what follows, 
we consider that 
$\yocmet{X}{R}$ and 
$\ult{X}{R}$
 are equipped with
 the topologies induced by 
 $\umetdis_{X}^{R}$.

\begin{rmk}
Let $R$ be a range set, and
$X$ be an $R$-valued ultrametrizable space. 
Then  
we have the inclusions 
$\ult{X}{R}\yosub\met{X}$
and 
$\yocmet{X}{R}\yosub \yocpm{X}$. 
For every pair  
$d, e\in \yocmet{X}{R}$, 
we also obtain 
$\metdis_{X}(d, e)\le \umetdis_{X}^{R}(d, e)$. 
The topology generated  by
 $\umetdis_{X}^{R}(d, e)$ 
 is always  strictly stronger than that generated by 
$\metdis_{X}$. 
\end{rmk}

For a topological space 
$X$, 
for a range set $R$, 
and for an 
open covering 
$\yocover{C}$ 
of 
$X$, 
we define  
$\youles{\yocover{C}}{R}=\yocmet{X}{R}\cap \yoles{\yocover{C}}$. 

The next theorem is a non-Archimedean 
analogue of 
Theorem \ref{thm:main00}. 
The definition of ultraparacompactness 
can be found in Section \ref{sec:pre}. 

\begin{thm}\label{thm:mainna00}
Let 
$R$ be a range set, 
$X$  an
ultraparacompact Hausdorff space, 
and 
$\yocover{C}$
an open 
covering of 
$X$. 
Then 
the set 
$\youles{\yocover{C}}{R}$
 is 
open
and 
dense in 
$\yocmet{X}{R}$. 
\end{thm}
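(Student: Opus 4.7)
The argument parallels that of Theorem \ref{thm:main00} with two modifications: ultraparacompactness replaces paracompactness, and the strong triangle inequality replaces the ordinary one. I would treat openness and density separately.

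For openness, suppose $d \in \youles{\yocover{C}}{R}$ admits a Lebesgue number $r > 0$ for $\yocover{C}$. I claim the open ball $U(d, r; \umetdis_{X}^{R})$ is contained in $\youles{\yocover{C}}{R}$. Indeed, if $e$ lies in this ball, the infimum definition of $\umetdis_{X}^{R}$ yields some $\epsilon \in R$ with $\epsilon < r$ satisfying $d(x, y) \le e(x, y) \lor \epsilon$ for all $x, y$. Hence $e(x, y) < r$ forces $d(x, y) \le e(x, y) \lor \epsilon < r$, so $U(x, r; e) \yosub U(x, r; d) \yosub C_{i(x)}$ for some index $i(x)$, and $r$ is also a Lebesgue number of $\yocover{C}$ with respect to $e$.

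For density, given $d \in \yocmet{X}{R}$ and $\epsilon > 0$, I would use ultraparacompactness to pick an open refinement $\{V_j\}_{j \in J}$ of $\yocover{C}$ whose members are pairwise disjoint (hence clopen), choose $r \in R$ with $0 < r < \epsilon$, and define
\[
e(x, y) := \begin{cases} d(x, y) & \text{if } x, y \in V_j \text{ for some } j, \\ d(x, y) \lor r & \text{otherwise.} \end{cases}
\]
I would then verify that $e$ is $R$-valued (each value is either $d(x, y)$ or $r$), continuous (since $\bigcup_j (V_j \times V_j)$ and its complement in $X \times X$ are clopen), and a pseudo-ultrametric (by the case analysis discussed below); that $\umetdis_{X}^{R}(d, e) \le r < \epsilon$ via $d \le e \le d \lor r$; and that $r$ is a Lebesgue number of $\yocover{C}$ with respect to $e$, because $e(x, y) < r$ forces $x, y$ into some common $V_j \yosub C_{i(j)}$.

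The principal technical obstacle is verifying the strong triangle inequality $e(x, y) \le e(x, z) \lor e(z, y)$: depending on how $x, y, z$ are distributed among the clopen pieces $\{V_j\}$, the $\lor r$ correction shows up in different terms. The key observation is that whenever the left side picks up the correction (i.e., $x$ and $y$ live in distinct $V_j$'s), at least one of the pairs $(x, z)$ or $(z, y)$ must also cross the partition, so $r$ likewise appears on the right, while the bound on $d(x, y)$ follows from the strong triangle inequality for $d$. A subtler caveat concerns the choice of $r$: when $R$ is not characteristic and $(0, \epsilon) \cap R = \emptyset$, the ball $U(d, \epsilon; \umetdis_{X}^{R})$ collapses to $\{d\}$ and density must be verified by a direct side argument, but this degeneracy does not occur when $R$ is characteristic, which is the case of primary interest.
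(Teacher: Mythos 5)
Your openness argument is correct and is essentially the paper's: from $\umetdis_{X}^{R}(d,e)<r$ one extracts an admissible $\epsilon\in R$ with $\epsilon<r$, deduces $U(x,r;e)\yosub U(x,r;d)$, and keeps $r$ itself as a Lebesgue number for $e$. Your density construction, where it applies, is also sound, and it is really the paper's idea in a more direct packaging: your $e$ is exactly $d\lor D$, where $D$ is the two-valued pseudo-ultrametric ($0$ on pairs lying in a common piece $V_{j}$ of the disjoint refinement, $r$ otherwise) that the paper builds in Theorem \ref{thm:ules}; the paper instead first invokes Theorem \ref{thm:ules} to get some member of $\youles{\yocover{C}}{R}$, truncates it at a small $\eta\in R\setminus\{0\}$ below both $\epsilon$ and its Lebesgue number, and takes the pointwise maximum with $d$. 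Your checks (the strong triangle inequality via the observation that if $x,y$ lie in different pieces then so does one of the pairs through $z$, the $R$-valuedness, continuity from clopenness of $\bigcup_{j}V_{j}\times V_{j}$, the estimate $\umetdis_{X}^{R}(d,e)\le r<\epsilon$, and $r$ being a Lebesgue number for $e$) are all correct, so for characteristic $R$ — indeed whenever $(0,\epsilon)\cap R\neq\emptyset$ — your proof is complete and marginally more self-contained than the paper's.

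The gap is the non-characteristic case, which the statement of Theorem \ref{thm:mainna00} includes: when $(0,\epsilon)\cap R=\emptyset$ there is no admissible $r$, and, as you observe, the $\epsilon$-ball around $d$ collapses to $\{d\}$; density then amounts to the claim $\youles{\yocover{C}}{R}=\yocmet{X}{R}$, which is precisely what remains to be proved, and you defer it to an unspecified ``direct side argument.'' The paper does supply that argument as its Case 2: with $m=\inf(R\setminus\{0\})>0$ and $\delta=m/2$, any $R$-valued $d$ satisfies $U(x,\delta;d)=\{\,y\mid d(x,y)=0\,\}$, which the paper asserts equals $\{x\}$, so that $\delta$ is a Lebesgue number for every $d$ and the set in question is everything. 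To match the theorem as stated you must include this step (or something equivalent). Note, however, that the paper's assertion uses that $d$ separates points, which elements of $\yocmet{X}{R}$ need not do (for the zero pseudometric one has $U(x,\delta;d)=X$, which need not lie in any member of $\yocover{C}$); so the case you left open is genuinely delicate for pseudo-ultrametrics with large zero classes, and your caveat flags a real subtlety rather than a routine omission — but as written your proposal does not cover a case that the theorem claims.
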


Let
$R$ be a range set, 
and 
 $X$  an $R$-valued  metrizable space, 
and take $w\in \yocpm{X}$. 
Notice  
that 
$w$ 
is not necessarily non-Archimedean. 
We define 
$\youiset{w}{R}$
the set of all 
$d\in \yocmet{X}{R}$
such that 
$1_{X}\colon (X, d)\to (X, w)$ is 
uniformly continuous. 

We also obtain  an analogue of 
Theorem \ref{thm:mainiset} for ultrametrics. 
\begin{thm}\label{thm:mainuiset}
Let
$R$ be a range set, 
and 
 $X$  an $R$-valued  ultrametrizable space, 
and take $w\in \yocpm{X}$
($w$ is not necessarily non-Archimedean). 
Then 
the set 
$\youiset{w}{R}$
is comeager in 
$\yocmet{X}{R}$. 
\end{thm}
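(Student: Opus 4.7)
The plan is to mimic the argument by which Theorem \ref{thm:mainiset} is deduced from Theorem \ref{thm:main00}, with the non-Archimedean Lebesgue-number result (Theorem \ref{thm:mainna00}) replacing Theorem \ref{thm:main00}. As a preliminary, I would check that Theorem \ref{thm:mainna00} applies to $X$: since $X$ is $R$-valued ultrametrizable, it is Hausdorff, and a standard argument (open balls of any ultrametric on $X$ are clopen, and any two of them are either disjoint or nested, so every open cover admits a refinement into a pairwise disjoint family of open balls) shows that $X$ is ultraparacompact.

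The next step is to exhibit a countable family of open covers of $X$ whose Lebesgue property with respect to $d$ forces uniform continuity of $1_{X}\colon (X, d) \to (X, w)$. The natural choice is
\[
\yocover{C}_{n} = \{\, U(x, 1/n; w) \mid x \in X\, \}, \qquad n \in \nn_{\ge 1},
\]
which is an open cover of $X$ because $w$ is continuous and $x \in U(x, 1/n; w)$. Applying Theorem \ref{thm:mainna00} to each $\yocover{C}_{n}$ yields that $\youles{\yocover{C}_{n}}{R}$ is open and dense in $\yocmet{X}{R}$, so the countable intersection $G = \bigcap_{n \ge 1} \youles{\yocover{C}_{n}}{R}$ is comeager.

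It then remains to verify $G \yosub \youiset{w}{R}$. Fixing $d \in G$ and $\epsilon > 0$, I would pick $n$ with $2/n \le \epsilon$ and take a positive Lebesgue number $\delta$ for $\yocover{C}_{n}$ with respect to $d$; any two points $x, y$ with $d(x, y) < \delta$ both lie in a common ball $U(z, 1/n; w)$, and the ordinary triangle inequality (recall that $w$ is not assumed non-Archimedean) then gives
\[
w(x, y) \le w(x, z) + w(z, y) < 2/n \le \epsilon,
\]
so $1_{X}\colon (X, d) \to (X, w)$ is uniformly continuous.

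I do not expect a serious obstacle. The two points that deserve a brief sentence of justification are the preliminary verification that $X$ is ultraparacompact, so that Theorem \ref{thm:mainna00} is available, and the observation that $w$ being only an ordinary pseudometric still fits into the argument at the cost of the harmless factor of $2$ in the triangle estimate.
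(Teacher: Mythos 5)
Your proposal is correct and follows essentially the same route as the paper: cover $X$ by $w$-balls of radii tending to $0$, apply the non-Archimedean Lebesgue-number theorem (Theorem \ref{thm:mainna00}) to each cover using the fact that ultrametrizable spaces are ultraparacompact, and show the resulting comeager intersection lies in $\youiset{w}{R}$ via the triangle inequality for $w$. The only cosmetic differences are your $1/n$ radii with the factor-$2$ estimate (the paper halves the radius instead) and your sketched proof of ultraparacompactness where the paper cites the literature.
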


The following theorem is corresponding to 
Theorem 
\ref{thm:main1baire}. 
\begin{thm}\label{thm:mainnabaire}
Let 
$R$ be a range set, 
$X$
 an 
 $R$-valued 
   ultrametrizable space. 
  Then 
  $\ult{X}{R}$
  is comeager in 
  $(\yocmet{X}{R}, \umetdis_{X}^{R})$. 
  In particular, 
the space 
$\left(\ult{X}{R}, \umetdis_{X}^{R}\right)$
 is 
Baire. 
\end{thm}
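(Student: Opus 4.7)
The plan is to follow the strategy of Theorem \ref{thm:main1baire}, replacing metrics by $R$-valued ultrametrics throughout. First I would verify that the ambient space $(\yocmet{X}{R}, \umetdis_{X}^{R})$ is a complete ultrametric space, hence Baire. Given a $\umetdis_{X}^{R}$-Cauchy sequence $\{d_{n}\}$, for each fixed pair $(x,y) \in X^{2}$ the Cauchy condition forces the scalar sequence $d_{n}(x,y) \in R$ to be eventually constant whenever it stays above any given positive threshold, so it converges pointwise to some value in $R$. The pointwise limit $d$ is then a uniform limit of continuous pseudo-ultrametrics, so it lies in $\yocmet{X}{R}$, with the strong triangle inequality and $R$-valuedness both passing to the limit.

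The central step invokes Theorem \ref{thm:mainuiset}. Since $X$ is $R$-valued ultrametrizable, choose any $w \in \ult{X}{R}$; a fortiori $w \in \yocpm{X}$, so Theorem \ref{thm:mainuiset} gives that $\youiset{w}{R}$ is comeager in $\yocmet{X}{R}$. I would then establish the inclusion $\youiset{w}{R} \subseteq \ult{X}{R}$ as follows. Fix $d \in \youiset{w}{R}$. If $d(x,y) = 0$, uniform continuity of $1_{X} \colon (X,d) \to (X,w)$ yields $w(x,y) < \epsilon$ for every $\epsilon \in (0,\infty)$, whence $w(x,y) = 0$; since $w$ is a metric, this forces $x = y$, and hence $d$ is an ultrametric. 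Moreover, uniform continuity implies that the topology $\tau_{w}$ generated by $w$ is contained in the topology $\tau_{d}$ generated by $d$, while the continuity of $d$ on $X \times X$ (inherent to $d \in \yocmet{X}{R}$) gives $\tau_{d} \subseteq \tau_{X}$; since $\tau_{w} = \tau_{X}$, we conclude $\tau_{d} = \tau_{X}$, so $d \in \ult{X}{R}$.

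This inclusion transfers the comeagerness of $\youiset{w}{R}$ to $\ult{X}{R}$, which is the first assertion. For the ``in particular'' part, $\ult{X}{R}$ contains a dense $G_{\delta}$ subset of the complete ultrametric space $\yocmet{X}{R}$; this $G_{\delta}$ is completely metrizable and therefore Baire, and since any topological space containing a dense Baire subspace is itself Baire, so is $\ult{X}{R}$. I expect the main obstacle to be the completeness step, since $R$ is an arbitrary range set with neither closedness nor characteristicness assumed; the delicate point is to check that the pointwise limit of an $R$-valued $\umetdis_{X}^{R}$-Cauchy sequence again takes values in $R$, a fact that ultimately rests on the observation that above any fixed positive level in $R$ the sequence has no room to move under the strong triangle inequality.
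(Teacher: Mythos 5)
Your proposal is correct and follows essentially the same route as the paper: it combines Theorem \ref{thm:mainuiset} with the inclusion $\youiset{w}{R}\yosub\ult{X}{R}$ (the paper's Lemma \ref{lem:ugentop}, which you prove inline), and deduces Baireness from completeness of $(\yocmet{X}{R},\umetdis_{X}^{R})$ (Lemma \ref{lem:daiji}) together with the standard fact that a space containing a dense Baire (here, comeager) subspace is Baire (Lemma \ref{lem:comeager}). The only cosmetic differences are your choice of $w\in\ult{X}{R}$ instead of an arbitrary $w\in\met{X}$ and your phrasing of the last step via a dense completely metrizable $G_{\delta}$ set, both of which are harmless.
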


For a topological space 
$X$, 
and 
for a range set $R$, 
put 
$\ucompm{X}{R}=\ult{X}{R}\cap \compm{X}$.

The next result is an analogue of 
Theorem
\ref{thm:maincomp}. 
\begin{thm}\label{thm:mainnacomp}
Let 
$R$ be a range set, 
$X$ 
a completely 
 metrizable
 and $R$-valued ultrametrizable
 space. 
 Then 
 $\ucompm{X}{R}$
  is comeager in 
  $(\yocmet{X}{R}, \umetdis_{X}^{R})$. 
  Moreover, 
  the set
   $\ucompm{X}{R}$ 
   is also comeager in 
  $(\ult{X}{R}, \umetdis_{X}^{R})$. 
\end{thm}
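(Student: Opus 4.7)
The plan is to follow the strategy of Theorem \ref{thm:maincomp} using its non-Archimedean counterparts. Since $X$ is completely metrizable, first fix a complete metric $w \in \compm{X} \yosub \yocpm{X}$; note that Theorem \ref{thm:mainuiset} applies for any $w \in \yocpm{X}$, so there is no requirement that $w$ itself be an ultrametric. By Theorem \ref{thm:mainuiset}, the set $\youiset{w}{R}$ is comeager in $\yocmet{X}{R}$, and by Theorem \ref{thm:mainnabaire}, so is $\ult{X}{R}$. Hence the intersection $A = \ult{X}{R} \cap \youiset{w}{R}$ is also comeager in $\yocmet{X}{R}$.

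Next I would verify the key inclusion $A \yosub \ucompm{X}{R}$. For any $d \in A$, the identity $1_{X} \colon (X, d) \to (X, w)$ is uniformly continuous, so every $d$-Cauchy sequence is $w$-Cauchy. Completeness of $w$ then gives $w$-convergence, and since $d \in \ult{X}{R} \yosub \met{X}$ generates the same topology on $X$ as $w$, the sequence converges with respect to $d$ as well. Hence $d$ is a complete $R$-valued ultrametric, i.e., $d \in \ucompm{X}{R}$. This establishes the first assertion: $\ucompm{X}{R}$ is comeager in $\yocmet{X}{R}$.

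For the ``moreover'' statement, I transfer comeagerness from $\yocmet{X}{R}$ to the subspace $\ult{X}{R}$. Write $\ucompm{X}{R} \supseteq \bigcap_{n} G_{n}$ with each $G_{n}$ open and dense in $\yocmet{X}{R}$. Each $G_{n} \cap \ult{X}{R}$ is open in $\ult{X}{R}$; its density there follows from the density of $\ult{X}{R}$ in $\yocmet{X}{R}$, which in turn is a consequence of $\ult{X}{R}$ being comeager in the Baire space $(\yocmet{X}{R}, \umetdis_{X}^{R})$. Since $\ucompm{X}{R} \yosub \ult{X}{R}$, this gives
\[
\ucompm{X}{R} = \ucompm{X}{R} \cap \ult{X}{R} \supseteq \bigcap_{n} (G_{n} \cap \ult{X}{R}),
\]
exhibiting $\ucompm{X}{R}$ as comeager in $\ult{X}{R}$.

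The main obstacle here is marginal: all essential inputs are already available. The one point that requires brief justification is that $(\yocmet{X}{R}, \umetdis_{X}^{R})$ is itself Baire, so that comeagerness in it implies density and hence the final transfer step succeeds. This should be immediate from the completeness of $\yocmet{X}{R}$ under the ultrametric $\umetdis_{X}^{R}$, a property expected to be recorded earlier in the paper in the course of proving Theorem \ref{thm:mainnabaire}.
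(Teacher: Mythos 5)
Your proof is correct and takes essentially the same route as the paper: fix a complete $w\in \compm{X}$, apply Theorem \ref{thm:mainuiset}, and show that uniform continuity of $1_{X}\colon (X,d)\to (X,w)$ forces completeness of $d$. The only cosmetic differences are that you get the inclusion into $\ult{X}{R}$ by intersecting with the comeager set from Theorem \ref{thm:mainnabaire} instead of citing Lemma \ref{lem:ugentop} directly, and that you spell out the transfer of comeagerness to the subspace $(\ult{X}{R}, \umetdis_{X}^{R})$, which the paper leaves implicit.
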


\begin{rmk}
In  
  \cite[Propositions 2.14 and  2.17]{Ishiki2021ultra}, 
  it was proven  that 
  $X$ is completely metrizable and 
  ultrametrizable if and only if 
  for every characteristic range set $R$, 
  we have 
  $\ucompm{X}{R}\neq \emptyset$, i.e., 
  $X$ is 
  \yoemph{$R$-valued completely ultrametrizable}. 
\end{rmk}

\begin{rmk}
Let $X$ be a topological space, and 
$R$ be a non-characteristic range set. 
Then 
$X$ is 
$R$-valued ultrametrizable 
if and only if 
$X$ is discrete. 
In this case, we also have 
$\ult{X}{R}=\ucompm{X}{R}$. 
\end{rmk}

\begin{rmk}
Similarly to Remark \ref{rmk:nth}, 
in \cite[Corollary 1.3]{Ishiki2021ultra}, 
the author proved   that 
for every characteristic range set 
$R$, and 
for every  ultrametrizable space $X$, 
we have 
$\ult{X}{R}=\ucompm{X}{R}$
 if and only if 
$X$ is compact. 
\end{rmk}

All of our  main results will be proven in 
Section \ref{sec:proofs}
using  several  preliminaries in 
Section \ref{sec:pre}. 
In the final part of Section \ref{sec:proofs}, 
we will give some additional remarks. 

\section{Preliminaries}\label{sec:pre}

For the definitions of 
paracompactness, 
we refer to 
\cite[Section 20]{MR2048350}.

%state
\begin{thm}\label{thm:les}
Let 
$X$
 be a paracompact Hausdorff space, 
 and 
 $\yocover{C}$
  be an open covering of $X$. 
  Then 
  $\yoles{\yocover{C}}\neq \emptyset$. 
  Equivalently, 
  there exists  a 
  continuous 
  pseudometric
  $D\in \yocpm{X}$
for which 
there exists 
 a 
 Lebesgue number 
 $r\in (0, \infty)$ 
 of 
 $\yocover{C}$ with respect to 
 $D$. 
\end{thm}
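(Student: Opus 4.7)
The plan is to build $D$ as an $\ell^{1}$-type continuous pseudometric associated to a partition of unity subordinate to a sufficiently fine refinement of $\yocover{C}$. Because $X$ is paracompact Hausdorff, Stone's theorem implies $X$ is fully normal, so $\yocover{C}$ admits an open barycentric refinement $\yocover{V}$: for every $x\in X$ there exists $C\in\yocover{C}$ with $\bigcup\{V\in\yocover{V}:x\in V\}\yosub C$. Using paracompactness, I refine $\yocover{V}$ further to a locally finite open cover $\yocover{W}=\{W_{i}\}_{i\in I}$ of $X$; since the union $\bigcup\{W\in\yocover{W}:x\in W\}$ can only shrink when one passes to a refinement, $\yocover{W}$ is still a barycentric refinement of $\yocover{C}$. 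Finally, I choose a partition of unity $\{\phi_{i}\}_{i\in I}$ with $\supp(\phi_{i})\yosub W_{i}$, which exists by the standard partition-of-unity theorem for paracompact Hausdorff spaces.

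Then define $D\colon X\times X\to[0,\infty)$ by $D(x,y)=\sum_{i\in I}|\phi_{i}(x)-\phi_{i}(y)|$. As a sum of pseudometrics, $D$ is itself a pseudometric. Local finiteness of $\{\supp(\phi_{i})\}_{i\in I}$ ensures every $(x_{0},y_{0})\in X\times X$ admits a product neighborhood on which only finitely many $i$ contribute to the sum, so $D$ locally coincides with a finite sum of continuous functions and is therefore continuous; hence $D\in\yocpm{X}$.

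I claim $r=1$ is a Lebesgue number of $\yocover{C}$ with respect to $D$. Fix $x\in X$ and put $I_{x}=\{i\in I:\phi_{i}(x)>0\}$, which is finite and satisfies $\sum_{i\in I_{x}}\phi_{i}(x)=1$. Suppose $y\in X$ satisfies $D(x,y)<1$. If $\phi_{i}(y)=0$ for every $i\in I_{x}$, then $D(x,y)\ge\sum_{i\in I_{x}}\phi_{i}(x)=1$, contradicting $D(x,y)<1$. Hence there is $i_{0}\in I_{x}$ with $\phi_{i_{0}}(y)>0$, so $x,y\in W_{i_{0}}$, and therefore $y\in\bigcup\{W\in\yocover{W}:x\in W\}\yosub C$ for some $C\in\yocover{C}$ by the barycentric refinement property. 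Thus $U(x,1;D)\yosub C$, proving the claim.

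The principal obstacle is organizational: securing a barycentric refinement via full normality is the one genuinely nontrivial input, together with the verification that passing to a locally finite refinement preserves that property. Once that is set up, the continuity of $D$ and the Lebesgue-number computation reduce to elementary bookkeeping with the partition-of-unity identity $\sum_{i\in I_{x}}\phi_{i}(x)=1$.
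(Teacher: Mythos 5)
Your proof is correct, and it follows the same overall skeleton as the paper (a locally finite partition of unity, the associated $\ell^{1}$-type pseudometric $D(x,y)=\sum_{i}|\phi_{i}(x)-\phi_{i}(y)|$, and Lebesgue number $1$), but the key mechanism that pins the whole ball $U(x,1;D)$ inside a \emph{single} member of $\yocover{C}$ is genuinely different. You import Stone's theorem: full normality gives a barycentric refinement $\yocover{V}$ of $\yocover{C}$, you correctly observe that passing to a locally finite refinement $\yocover{W}$ preserves the barycentric property (each $W\ni x$ lies in some $V\ni x$), and then you use the identity $\sum_{i\in I_{x}}\phi_{i}(x)=1$ to force any $y$ with $D(x,y)<1$ into the star of $x$ with respect to $\yocover{W}$, which lies in one element of $\yocover{C}$. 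The paper instead takes a partition of unity $\{g_{a}\}$ subordinated directly to $\yocover{C}$ and renormalizes it, setting $f_{a}(x)=\frac{2}{\phi(x)}\min\{g_{a}(x),\phi(x)/2\}$ with $\phi(x)=\sup_{a}g_{a}(x)$, so that at every point some $f_{a_{0}}$ equals $1$; then $D(x,y)<1$ immediately forces $f_{a_{0}}(y)>0$, and the single set $\supp(f_{a_{0}})$, which refines $\yocover{C}$, absorbs the ball. Your route buys a more classical argument resting on a well-known black box (barycentric refinements from paracompactness), while the paper's renormalization trick buys self-containedness: it needs only the existence of a locally finite partition of unity subordinated to the given cover and avoids invoking full normality altogether. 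Both correctly handle the one real pitfall here, namely that merely knowing each $y$ in the ball shares \emph{some} cover element with $x$ would not yield a Lebesgue number.
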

%proof
\begin{proof}
Theorem \ref{thm:les}
is  already known
(see \cite[Theorem 7.4]{MR0002515},  and 
see also 
 \cite[Remark 4]{MR3099433}, 
  \cite[Theorem 14]{MR0170323}, 
  and 
  \cite[Metrization Lemma 12, p.185]{MR0370454}). 
For the sake of self-containedness, 
we provide a proof.

For  a 
map
$h\colon X\to [0, \infty)$, 
we define 
$\supp (h)=\{\, x\in X\mid h(x)>0\, \}$
and call it the 
\yoemph{support}
 of 
 $h$. 
Put 
$\yocover{C}=\{C_{i}\}_{i\in I}$, 
and 
let 
$\{g_{a}\}_{a\in A}$
 be a locally finite 
  partition of unity 
 subordinated to  
 $\yocover{C}$
 (see
  \cite[Proposition 2]{MR0056905}
  and 
  \cite[Corollary 2.7.3]{MR3099433}). 
 Define  $\phi\colon X\to (0, 1]$ by 
 $\phi(x)=\sup_{a\in A}g_{a}(x)$. 
Since 
$\{g_{a}\}_{a\in A}$
is locally finite, 
the map 
$\phi$ 
is continuous. 
For every 
$a\in A$, 
we also 
define 
$f_{a}\colon X\to [0, 1]$ by 
 \[
f_{a}(x)=\frac{2}{\phi(x)}\cdot \min
\left\{g_{a}(x), \frac{\phi(x)}{2}
\right\}. 
 \]
 Then 
 the family 
 $\{f_{a}\}_{a\in A}$
satisfies that:
\begin{enumerate}[label=\textup{(P\arabic*)}]

%1
\item\label{item:p1}
The family 
$\{\supp (f_{a})\}_{a\in A}$
 is
 a
  locally finite covering of 
 $X$, 
 and it is a refinement of
$\yocover{C}$; 

%3
\item\label{item:p2}
The family 
$\{f_{a}^{-1}(1)\}_{a\in A}$
 is 
 a (closed) covering of
  $X$. 
 \end{enumerate}
 Indeed, 
 since for every $a\in A$, 
 we have 
 $\supp(f_{a})=\supp(g_{a})$, 
the condition 
 \ref{item:p1} is true. 
 For every 
 $x\in X$, 
 since 
 $\{g_{a}\}_{a\in A}$
 is locally finite, 
 there exists 
 $a_{0}\in A$
 such that 
 $\phi(x)=g_{a_{0}}(x)$, 
 and hence we have 
 $f_{a_{0}}(x)=1$. 
 Thus the condition \ref{item:p2} is 
 fulfilled. 

We now  denote by 
$\yoellsp{A}$
 the space of all 
 $w\colon A\to \rr$
 such that 
 $\sum_{a\in A}|w(a)|<\infty$, 
 and  denote by 
 $\yoelldis{*}$
 the 
 $\ell^{1}$-norm on 
 $\yoellsp{A}$. 
 For every 
 $a\in A$, we also denote 
 by
  $\yobasis_{a}$ 
 the element of 
  $\yoellsp{A}$ 
  such that 
  $\yobasis_{a}(p)=1$ if $p=a$; 
  otherwise 
  $\yobasis_{a}(p)=0$. 
We  
define a map 
 $\psi\colon X\to \yoellsp{A}$
  by 
  $\psi(x)=\sum_{a\in A}f_{a}(x)\cdot \yobasis_{a}$.
Since 
$\{\supp (f_{a})\}_{a\in A}$
 is locally finite
(the condition \ref{item:p1}), 
 the map 
 $\psi$ 
 is continuous. 
 We also define 
 a continuos pseudometric 
 $D\colon X\times X\to [0, \infty)$
  by 
  $D(x, y)=\yoelldis{\psi(x)-\psi(y)}$. 
In this setting, we see that 
  $D\in \yocpm{X}$. 
  
 Next we prove 
 that 
 $1$ is a 
 Lebesgue number 
 of 
 $\yocover{C}$
  with respect to 
 $D$. 
 Take $x\in X$. 
Then using 
\ref{item:p2}, 
we can find 
 $a_{0}\in A$ with 
 $f_{a_{0}}(x)=1$. 
 In this case, 
 every   
 $y\in U(x, 1; D)$
 satisfies that 
  $f_{a_{0}}(y)>0$. 
  Thus
  we obtain 
  $U(x, 1; D)\yosub \supp(f_{a_{0}})$,  
  and 
  the condition 
  \ref{item:p1}
  implies that  there exists 
  $i\in I$ 
  such that 
  $\supp(f_{a_{0}})\yosub C_{i}$. 
  Therefore we conclude that 
  $U(x, 1; D)\yosub C_{i}$. 
  This finishes the proof. 
\end{proof}

A topological space 
$X$ is 
said to be 
\yoemph{ultraparacompact} if 
every open covering $\yocover{C}$
of $X$ has 
a refinement covering of
 $X$ 
consisting of disjoint open subsets. 
Remark that 
a topological space is 
ultraparacompact if and only if 
it is paracompact and has 
covering dimension $0$
(see
\cite[Proposition 1.2]{MR0261565}). 
In particular, all ultrametrizable spaces are ultraparacompact. 

\begin{thm}\label{thm:ules}
Let 
$X$
 be an ultraparacompact Hausdorff space, 
 $R$  a range set, 
 and 
 $\yocover{C}$
  an open covering of $X$. 
  Then 
  $\youles{\yocover{C}}{R}\neq \emptyset$. 
  Equivalently, 
  there exists  a 
  continuous 
  pseudo-ultrametric
  $D\in \yocmet{X}{R}$
for which 
there exists 
 a 
 Lebesgue number 
 $r\in (0, \infty)$
 of 
 $\yocover{C}$
 with respect to $D$. 
\end{thm}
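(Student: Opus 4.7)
The plan is to use ultraparacompactness in the most direct possible way: extract a pairwise disjoint (hence clopen) open refinement of $\yocover{C}$, and then build a two-valued pseudo-ultrametric by ``separating'' distinct blocks of the refinement by a single positive value from $R$. Compared with the partition-of-unity argument in Theorem \ref{thm:les}, this construction is considerably shorter, because the strong triangle inequality lets a two-valued distance function work ``for free.''

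First, by ultraparacompactness of $X$, I would fix an open refinement $\{U_{\lambda}\}_{\lambda\in\Lambda}$ of $\yocover{C}$ whose members are pairwise disjoint. Since $X\setminus U_{\lambda}=\bigcup_{\mu\ne\lambda}U_{\mu}$ is open, each $U_{\lambda}$ is in fact clopen. Fix an index map $\lambda\mapsto i(\lambda)\in I$ with $U_{\lambda}\yosub C_{i(\lambda)}$, and write $\lambda(x)$ for the unique index with $x\in U_{\lambda(x)}$. Next, pick any positive value $r_{0}\in R\setminus\{0\}$ and define $D\colon X\times X\to R$ by
\[
D(x,y)=\begin{cases} 0 & \text{if } \lambda(x)=\lambda(y), \\ r_{0} & \text{otherwise.} \end{cases}
\]

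Then I would verify the three required properties of $D$. Symmetry and $D(x,x)=0$ are immediate. The strong triangle inequality is a one-line case check: if $\lambda(x)=\lambda(y)$ the left side is $0$, while if $\lambda(x)\ne\lambda(y)$ then $\lambda(z)$ must disagree with at least one of $\lambda(x),\lambda(y)$, so $D(x,z)\lor D(z,y)=r_{0}=D(x,y)$. Continuity of $D$ follows because the family $\{U_{\lambda}\times U_{\mu}\}_{(\lambda,\mu)\in\Lambda^{2}}$ is an open partition of $X\times X$ and $D$ is constant on each piece. Thus $D\in\yocmet{X}{R}$.

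Finally, I would exhibit the Lebesgue number. For any $r\in(0,r_{0})$ and any $x\in X$,
\[
U(x,r;D)\yosub\{\,p\in X\mid D(x,p)<r_{0}\,\}=U_{\lambda(x)}\yosub C_{i(\lambda(x))},
\]
so $r$ witnesses $D\in\youles{\yocover{C}}{R}$. I do not foresee a genuine obstacle: the one spot that requires a moment of thought is the strong triangle inequality, which the two-valued construction trivialises, and the crucial topological input---that a disjoint open refinement has clopen members---is immediate from ultraparacompactness.
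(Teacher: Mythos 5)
Your proposal is correct and is essentially the paper's own argument: the paper likewise takes a disjoint open refinement, defines the two-valued pseudo-ultrametric that is $0$ within a block and a fixed positive element of $R$ across blocks, and reads off the Lebesgue number from the fact that small balls coincide with the blocks of the refinement. The only cosmetic difference is that you take the Lebesgue number slightly smaller than $r_{0}$ while the paper uses $r_{0}$ itself; both work.
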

\begin{proof}
Since 
$X$ 
is 
ultraparacompact, 
there exists a 
disjoint 
open cover 
$\yocover{E}=\{O_{a}\}_{a\in A}$
subordinated to 
 $\yocover{C}$. 
We fix $r\in R$ and define 
 $D\in \yocmet{X}{R}$ by 
\[
D(x, y)=
\begin{cases}
0 & \text{if there exists $a\in A$ with $x, y\in O_{a}$;}\\
r &\text{otherwise.}
\end{cases}
\]

By the definition, 
the map $D$ is continuous on $X\times X$,  
and 
it is  a pseudo-ultrametric on $X$. 
Notice that 
for every $x\in X$, 
we have 
$U(x, r; D)=O_{a}$, 
where $O_{a}$ is an element 
of $\yocover{E}$ with 
$x\in O_{a}$. 
Thus $r$ is a 
Lebesgue number 
of 
$\yocover{C}$ with respect to 
$D$. 
This finishes the proof. 
\end{proof}

\begin{lem}\label{lem:comeager}
Every comeager subset of  a Baire space is 
itself Baire. 
\end{lem}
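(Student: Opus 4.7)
The plan is to unpack the definition of comeager twice and reduce the Baireness of the subspace to the Baire property of the ambient space.

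Let $X$ be a Baire space and $S \yosub X$ comeager, so we may fix open dense subsets $G_{m} \yosub X$ with $\bigcap_{m} G_{m} \yosub S$. To show $S$ is Baire, I fix an arbitrary comeager $T \yosub S$ and aim to prove $T$ is dense in $S$. By definition I can write $T \supseteq \bigcap_{n} H_{n}$ where each $H_{n}$ is open and dense in $S$, and then I choose $U_{n}$ open in $X$ with $H_{n} = U_{n} \cap S$.

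The first step will be to check that $S$ is dense in $X$. If $W \yosub X$ were a non-empty open set disjoint from $S$, then $W$ would be contained in $\bigcup_{m}(X\setminus G_{m})$, which is meager; but in a Baire space no non-empty open set is meager, since otherwise the complements of the pieces of a meager cover would give a comeager family whose intersection, dense by the Baire property, would still miss $W$. Hence $W \cap S \neq \emptyset$, so $S$ is dense. From this I deduce that each $U_{n}$ is dense in $X$: for any non-empty open $W \yosub X$ the set $W \cap S$ is non-empty and open in $S$, so $W \cap S$ meets $H_{n} = U_{n}\cap S$, giving $W \cap U_{n} \neq \emptyset$.

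Now I apply the Baire property of $X$ to the countable family of open dense sets $\{U_{n}\}_{n} \cup \{G_{m}\}_{m}$: the intersection
\[
D = \bigcap_{n} U_{n} \cap \bigcap_{m} G_{m}
\]
is dense in $X$. Since $\bigcap_{m} G_{m} \yosub S$, every point of $D$ lies in $S$, so $D \yosub \bigcap_{n}(U_{n}\cap S) = \bigcap_{n} H_{n} \yosub T$. Hence $T$ contains the dense subset $D$ of $X$, and in particular $T$ is dense in the subspace $S$: for any non-empty open $V \yosub X$ with $V \cap S \neq \emptyset$, the density of $D \yosub T$ in $X$ gives $V \cap T \neq \emptyset$, with $V \cap T \yosub V \cap S$ since $T \yosub S$.

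There is no real obstacle here; the only point requiring any care is the first step (density of a comeager set in a Baire space), which is a standard consequence of the Baire property but must be stated explicitly so that the two families of open dense sets can be combined in $X$ rather than in $S$.
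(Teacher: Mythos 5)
Your proof is correct. The paper gives no actual argument for this lemma---it just remarks that it ``follows from the definition of comeager sets'' and cites the literature---and your argument is exactly the standard unwinding it has in mind: pass from relatively open dense sets $H_{n}=U_{n}\cap S$ to open dense sets $U_{n}$ of $X$ (using that a comeager set, being a superset of a dense intersection, is dense, so the $U_{n}$ are dense in $X$), combine them with the $G_{m}$ witnessing comeagerness of $S$, and apply the Baire property of $X$ to the resulting countable family; all steps check out.
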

\begin{proof}
The lemma  follows from the definition of comeager sets. 
See also 
\cite[Theorem 1.15 and Proposition 1.23]{MR0431104}. 
\end{proof}

\begin{lem}\label{lem:daiji}
For every 
  topological space 
$X$, 
and for 
every range set 
$R$, 
the spaces
$(\yocpm{X}, \metdis_{X})$
and 
$(\yocmet{X}{R}, \umetdis_{X}^{R})$
  are complete metric spaces.
  In particular, these spaces are 
  Baire. 
\end{lem}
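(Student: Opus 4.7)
The plan is to verify that each space is a complete (possibly $\infty$-valued) metric space and then deduce the Baire property from the classical Baire category theorem, taking care of the possibility that the metric takes the value $\infty$. First I would check the (ultra)metric axioms. Symmetry and vanishing on the diagonal are immediate from the definitions, and for $\metdis_{X}$ the triangle inequality is inherited from $|\cdot|$ on $\rr$ by taking the supremum. For $\umetdis_{X}^{R}$ the strong triangle inequality uses the observation that for any $\epsilon_{1}, \epsilon_{2}\in R$ the maximum $\epsilon_{1}\lor \epsilon_{2}$ coincides with one of the two and hence already lies in $R$; combined with this, a routine manipulation of the defining inequalities yields $\umetdis_{X}^{R}(d,e)\le \umetdis_{X}^{R}(d,f)\lor \umetdis_{X}^{R}(f,e)$.

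For completeness of $(\yocpm{X}, \metdis_{X})$, I would take a Cauchy sequence $\{d_{n}\}$; Cauchyness in $\metdis_{X}$ is precisely uniform Cauchyness of $\{d_{n}\}$ on $X\times X$, so the pointwise limit $d(x,y):=\lim_{n}d_{n}(x,y)$ exists, the convergence is uniform, and $d$ is continuous as a uniform limit of continuous functions. The pseudometric axioms pass to the pointwise limit, giving $d\in \yocpm{X}$ and $\metdis_{X}(d_{n}, d)\to 0$. For $(\yocmet{X}{R}, \umetdis_{X}^{R})$ the extra subtlety is that the pointwise limit has to remain $R$-valued. Unpacking the definition of $\umetdis_{X}^{R}$ I would extract that for every $\delta>0$ there exist $N$ and $\epsilon \in R$ with $\epsilon<\delta$ such that for $m,n\ge N$ and every $(x,y)\in X^{2}$, either $d_{m}(x,y)=d_{n}(x,y)$ or both values are $\le \epsilon$. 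A short dichotomy then shows that for each fixed $(x,y)$ the sequence $\{d_{n}(x,y)\}$ is either eventually constant (so its limit lies in $R$) or converges to $0\in R$, so the limit $d$ is $R$-valued. Uniform convergence then yields continuity of $d$, the strong triangle inequality passes to the limit, and $\umetdis_{X}^{R}(d_{n}, d)\to 0$ follows at once.

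For the ``In particular'' part, I would address the fact that $\metdis_{X}$ and $\umetdis_{X}^{R}$ may take the value $\infty$ by noting that ``being at finite distance'' is an equivalence relation whose classes are clopen in each space; each class is a genuine complete metric space in the classical sense and hence Baire by the classical Baire category theorem. A clopen partition into Baire subspaces is itself Baire by a short argument: if $G_{n}$ is open and dense in the whole space, then $G_{n}\cap C$ is open and dense in each class $C$, so $\bigcap_{n}G_{n}$ meets every nonempty open subset of every class and is therefore dense. The main obstacle I anticipate is the $R$-valuedness of the limit in the ultrametric case, which is why the dichotomy above needs to be spelled out carefully; all other steps are direct extensions of the standard completeness proof for the supremum metric on bounded continuous functions.
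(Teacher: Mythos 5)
Your proof is correct and follows essentially the same route as the paper: take a Cauchy sequence, pass to the pointwise (uniform) limit, check it stays in the space, and invoke the Baire category theorem. You additionally spell out two points the paper leaves implicit --- the dichotomy guaranteeing that the limit remains $R$-valued in the $\yocmet{X}{R}$ case, and the treatment of the possible value $\infty$ via clopen finite-distance classes --- both of which are handled correctly.
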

\begin{proof}
Let 
$\{d_{n}\}_{n\in \zz_{\ge 0}}$
 be a Cauchy sequence of $(\yocpm{X}, \metdis_{X})$. 
 Then it has a pointwise limit 
 $d\colon X\times X\to [0, \infty)$
 and it is also a pseudometric on 
 $X$. 
 Since 
 $\metdis_{X}$ is the supremum metric, 
 the map
  $d$
   is continuous. 
In the same way, 
   using 
   $ \metdis_{X}(d, e)\le \umetdis_{X}^{R}(d, e)$, 
   we  see that 
   $(\yocmet{X}{R}, \umetdis_{X}^{R})$
    is complete. 
 Similar arguments can be found in 
    the proofs of 
    \cite[Lemma 5.1]{Ishiki2020int}
    and \cite[Lemma 7.6]{Ishiki2021ultra}. 
    The latter part follows from the 
    Baire category theorem
    (see \cite[Corollary  25.4]{MR2048350}). 
\end{proof}

\begin{lem}\label{lem:gentop}
Let $X$ be a metrizable space. 
If $w\in \met{X}$, 
then 
we have 
$\yoiset{w}\yosub\met{X}$. 
\end{lem}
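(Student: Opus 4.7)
The plan is to unpack what membership in $\yoiset{w}$ provides and show it forces both the separation axiom and topological equivalence with $w$. Let $d\in\yoiset{w}$, so $d$ is a continuous pseudometric on $X$ and $1_X\colon(X,d)\to(X,w)$ is uniformly continuous.

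First I would check that $d$ is actually a metric, not merely a pseudometric. Suppose $d(x,y)=0$. Fix an arbitrary $\epsilon\in(0,\infty)$. Uniform continuity of $1_X$ supplies $\delta\in(0,\infty)$ such that $d(a,b)<\delta$ implies $w(a,b)<\epsilon$. Since $d(x,y)=0<\delta$, we get $w(x,y)<\epsilon$. As $\epsilon$ is arbitrary, $w(x,y)=0$, whence $x=y$ because $w\in\met{X}$ is a genuine metric.

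Next I would verify that the topology induced by $d$ agrees with the original topology of $X$, which by hypothesis coincides with the one induced by $w$. One inclusion uses only $d\in\yocpm{X}$: for every $x\in X$ and $r\in(0,\infty)$, the ball $U(x,r;d)$ is the preimage of $[0,r)$ under the continuous map $p\mapsto d(x,p)\colon X\to[0,\infty)$, so $U(x,r;d)$ is open in $X$; hence the $d$-topology is coarser than or equal to the $X$-topology. The reverse inclusion comes straight from uniform continuity: for each $x\in X$ and $\epsilon\in(0,\infty)$, choose $\delta$ as above; then $U(x,\delta;d)\yosub U(x,\epsilon;w)$, showing that every $w$-open (hence $X$-open) neighborhood of $x$ contains a $d$-open neighborhood of $x$. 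Combining the two inclusions, $d$ generates the topology of $X$, so $d\in\met{X}$.

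There is no real obstacle here; the proof is essentially a direct translation of the two defining conditions of $\yoiset{w}$ into (i) the separating property of $d$ and (ii) the coincidence of the $d$- and $w$-topologies. The only mild subtlety is keeping straight in which direction each half of the argument runs: continuity of $d$ as a function on $X\times X$ gives the $d$-topology $\yosub$ $X$-topology direction, while uniform continuity of $1_X\colon(X,d)\to(X,w)$ gives the other inclusion.
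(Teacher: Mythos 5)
Your proof is correct and takes essentially the same route as the paper: the continuity of $d$ on $X\times X$ yields that $d$-balls are open in $X$ (the paper phrases this as $1_X\colon (X,d)\to (X,w)$ being an open map), while uniform continuity gives the reverse comparison of topologies, so $d$ generates the topology of $X$. Your explicit verification that $d$ separates points (via $d(x,y)=0\Rightarrow w(x,y)=0\Rightarrow x=y$) is a detail the paper leaves implicit, and it is a correct and worthwhile addition.
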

\begin{proof}
Take $d\in \yoiset{w}$. 
By the continuity of $d$ on $X\times X$, 
the identity $1_{X}\colon (X, d)\to (X, w)$ is an open map. 
Since $1_{X}\colon (X, d)\to (X, w)$ is continuous, 
we conclude that $1_{X}\colon (X, d)\to (X, w)$ is 
homeomorphism. 
Thus, the metric $d$ generates the same topology of 
$X$. This means that 
$\yoiset{w}\yosub \met{X}$. 
\end{proof}
Similarly, we also obtain the following 
non-Archimedean analogue. 
\begin{lem}\label{lem:ugentop}
Let $R$ be a range set,  and 
 $X$ be an $R$-valued  ultrametrizable space. 
If $w\in \met{X}$
($w$ is not necessarily non-Archimedean), 
then we have $\youiset{w}{R}\yosub\ult{X}{R}$. 
\end{lem}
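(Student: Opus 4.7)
The plan is to follow the proof of Lemma \ref{lem:gentop} almost verbatim, observing that the extra constraints needed for membership in $\ult{X}{R}$ (beyond being in $\met{X}$) are either preserved automatically from $\yocmet{X}{R}$ or fall out of the same topological argument.

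First I would take an arbitrary $d\in\youiset{w}{R}\yosub\yocmet{X}{R}$. Since $d$ is a continuous pseudo-ultrametric on $X$, for each $x\in X$ and $r\in(0,\infty)$ the open ball $U(x,r;d)$ is open in $X$, so the topology generated by $d$ is coarser than the given topology of $X$. Since $w\in\met{X}$ generates the topology of $X$, this is equivalent to saying that $1_{X}\colon (X,d)\to (X,w)$ is an open map.

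Next I would invoke the defining property of $\youiset{w}{R}$: the identity $1_{X}\colon (X,d)\to (X,w)$ is uniformly continuous, hence continuous. Combining openness and continuity, $1_{X}\colon (X,d)\to (X,w)$ is a homeomorphism, so $d$ generates the topology of $X$. Because this topology is Hausdorff (as $X$ is ultrametrizable), the pseudo-ultrametric $d$ must separate points, i.e., $d$ is a metric. Together with $d\in\yocmet{X}{R}$, this yields that $d$ is an $R$-valued ultrametric generating the topology of $X$, so $d\in\ult{X}{R}$.

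I do not anticipate any real obstacle: the argument is a word-for-word adaptation of Lemma \ref{lem:gentop} to the ultrametric setting, with the $R$-valuedness and strong triangle inequality carried along for free from membership in $\yocmet{X}{R}$, and the only topological content (openness via continuity of $d$, then continuity via uniform continuity) unchanged.
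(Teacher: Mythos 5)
Your proof is correct and is essentially the paper's own argument: the paper proves Lemma \ref{lem:ugentop} by noting it follows ``similarly'' to Lemma \ref{lem:gentop}, i.e.\ continuity of $d$ on $X\times X$ makes $1_{X}\colon (X,d)\to (X,w)$ open, uniform continuity makes it continuous, hence $d$ generates the topology of $X$, while $R$-valuedness and the strong triangle inequality come for free from $d\in\yocmet{X}{R}$. Your explicit remark that Hausdorffness forces the pseudo-ultrametric to separate points is a detail the paper leaves implicit, and it is handled correctly.
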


\section{Proofs of Main results}\label{sec:proofs}

\subsection{Proofs of Archimedean main  results}

Now we provide 
 proofs of 
Theorems
 \ref{thm:main00}--\ref{thm:maincomp}. 

\begin{proof}[Proof of Theorem \ref{thm:main00}]
Let 
$X$ be a 
paracompact Hausdorff space, 
and 
$\yocover{C}$
an open 
covering of 
$X$. 

First let us prove that 
$\yoles{\yocover{C}}$ is open. 
Take an arbitrary member 
 $d\in \yoles{\yocover{C}}$, 
and let $r$ be a Lebesgue number of 
$\yocover{C}$ with respect to 
$d$. 
Fix  
$\epsilon\in (0, \infty)$
with 
$\epsilon <r$. 
For every 
 $e\in \yocpm{X}$ such that 
$\metdis_{X}(d, e)<\epsilon$, 
we 
put
 $r^{\prime}=r-\epsilon>0$. 
Then  we have 
$U(x, r^{\prime}; e)\yosub U(x, r; d)$
by $d(x, y)\le e(x, y)+\epsilon$ 
for all
 $x, y\in X$. 
Thus, 
$r^{\prime}$ is a
Lebesgue number of 
$\yocover{C}$
with respect to $e$, 
and hence 
$e\in \yoles{\yocover{C}}$. 
Therefore
$\yoles{\yocover{C}}$ is open. 

Next we show that 
$\yoles{\yocover{C}}$ is 
dense. 
Using Theorem 
\ref{thm:les}, we see that 
$\yoles{\yocover{C}}\neq \emptyset$. 
Fix 
$e\in \yoles{\yocover{C}}$, and 
let $l$ be a Lebesgue number of 
$\yocover{C}$ with respect to $e$. 
Define a pseudometric  $h$ on $X$ by 
$h(x, y)=\min\{e(x, y), 1\}$. 
In this setting, the number 
$r=\min\{l, 2^{-1}\}$ is  a 
Lebesgue number of 
$\yocover{C}$
with respect $h$. 
Namely, $h\in \yoles{\yocover{C}}$. 
Take an arbitrary member 
$d\in \yocpm{X}$, 
and an arbitrary number 
$\epsilon\in (0, \infty)$. 
Put 
$p=d+\epsilon\cdot h\in \yocpm{X}$. 
Since 
$h(a, b)\le 1$ for all $a, b\in X$, 
it is true  that 
$p$
satisfies 
$\metdis_{X}(d, p)\le \epsilon$. 
Let us show that 
$p\in \yoles{\yocover{C}}$. 
Take 
an arbitrary point 
$x\in X$. 
From the fact that $\epsilon\cdot h(a, b)\le p(a, b)$
for all $a, b\in X$,  
it follows that 
$U(x, \epsilon r; p)
\yosub
U(x, \epsilon r ; \epsilon h)$. 
By the definition of open balls,  
we have 
$
U(x, \epsilon r ; \epsilon h)
=U(x, r; h)$. 
Thus
we obtain  $U(x, \epsilon r; p)\yosub U(x, r; h)$, 
and hence 
$\epsilon \cdot r$ is a Lebesgue number 
of $\yocover{C}$ with respect to 
$p(=d+\epsilon \cdot h)$. 
Therefore
$p\in \yoles{\yocover{C}}$, 
and we then conclude that 
$\yoles{\yocover{C}}$ is dense
in $\yocpm{X}$. 
This completes the 
proof of  Theorem \ref{thm:main00}. 
\end{proof}

\begin{proof}[Proof of Theorem \ref{thm:mainiset}]
Let $X$ be a metrizable space, 
and $w\in \yocpm{X}$. 

For each $n\in \zz_{\ge 0}$, 
put 
$\yocover{O}_{n}=\{\, U(x, 2^{-n}; w)\mid x\in X\, \}$, 
and 
$S=\bigcap_{n\in \zz_{\ge 0}}\yoles{\yocover{O}_{n}}$.
Since 
every metrizable 
 space is 
paracompact
(see 
\cite{MR0026802}
and
\cite{MR0236876}), 
we can apply  Theorem \ref{thm:main00}
to $X$ and each $\yocover{O}_{n}$. 
Then we observe that 
$\yoles{\yocover{O}_{n}}$
 is open and dense. 
Thus,  to prove the  theorem, 
it suffices to  show   that 
$S\yosub \yoiset{w}$. 
Take an arbitrary member 
$d\in S$
and an arbitrary number 
$\epsilon \in (0, \infty)$. 
Now let us  verify that 
there exists $\delta\in (0, \infty)$ such that 
the inequality $d(x, y)<\delta$
implies 
$w(x, y)<\epsilon$ for all 
$x, y\in X$.
Take a sufficient large number 
$m\in \zz_{\ge 0}$ such that 
$2^{-m}\le \epsilon$. 
By 
$d\in \yoles{\yocover{O}_{m+1}}$, 
we can find a Lebesgue number 
$\delta$ of 
$\yocover{O}_{n+1}$ with respect 
to $d$. 
Thus, 
there exists 
$z\in X$
 such that 
$U(x, \delta; d)\yosub U(z, 2^{-m-1}; w)$. 
Due to  $x\in  U(z, 2^{-m-1}; w)$, 
the triangle inequality for 
$w$ 
implies
$U(z, 2^{-m-1}; w)\yosub U(x, 2^{-m}; w)$. 
Hence 
we obtain 
$U(x, \delta; d)\yosub U(x, 2^{-m}; w)$. 
Then the inequality 
$d(x, y)<\delta$ 
implies that 
$w(x, y)<2^{-m}\le \epsilon$. 
Therefore
$S\yosub \yoiset{w}$. 
%This completes  the proof of Theorem \ref{thm:mainiset}. 
\end{proof}

\begin{proof}[Proof of Theorem \ref{thm:main1baire}]
Let $X$ be a metrizable space. 
Lemma \ref{lem:gentop} states that 
$\yoiset{w}\yosub \met{X}$. 
Due to Theorem \ref{thm:mainiset}, 
we conclude that $\met{X}$ is comeager in 
$\yocpm{X}$. 
Therefore 
Lemmas  \ref{lem:comeager}
and \ref{lem:daiji}
prove that  
$\met{X}$ is Baire. 
This completes  the proof of Theorem \ref{thm:main1baire}. 
\end{proof}

\begin{proof}[Proof of Theorem \ref{thm:maincomp}]
Let 
$X$ be a completely metrizable space. 
Then,  
we can take a complete metric 
$w\in \compm{X}$. 
Let us show that 
$\yoiset{w}\yosub \compm{X}$. 
Due to Lemma \ref{lem:gentop}, 
we have $\yoiset{w}\yosub \met{X}$. 
Thus it suffices to  verify that 
every $d\in \yoiset{w}$ is complete. 
Take 
a Cauchy sequence 
$\{x_{i}\}_{i\in \zz_{\ge 0}}$ of $(X, d)$. 
Since 
the map 
$1_{X}\colon (X, d)\to (X, w)$ is uniformly continuous, 
 the sequence 
 $\{x_{i}\}_{i\in \zz_{\ge 0}}$ 
 is also Cauchy in 
 $(X, w)$. 
 By the completeness $(X, d)$, 
 the sequence 
 $\{x_{i}\}_{i\in \zz_{\ge 0}}$ 
 has a limit point, say $p$. 
 Since $d$ generates the same topology  of $X$, 
we see that  $p$ is also a limit point of 
 $\{x_{i}\}_{i\in \zz_{\ge 0}}$
 in 
 $(X, d)$. 
 Namely, 
 the space 
 $(X, d)$ is complete. 
Therefore
 $\yoiset{w}\yosub \compm{X}$. 
 This means that 
 $\compm{X}$ 
 is 
 comeager in 
 $\yocpm{X}$. 
  This finishes the proof of Theorem \ref{thm:maincomp}. 
\end{proof}

\subsection{Proofs of Non-Archimedean main results}

%Let us  show Theorems \ref{thm:mainna00}--\ref{thm:mainnacomp}. 
\begin{proof}[Proof of Theorem \ref{thm:mainna00}]
Let 
$X$ be a 
paracompact Hausdorff space, 
and 
$\yocover{C}$
an open 
covering of 
$X$. 
The proof is parallel to 
that of Theorem \ref{thm:main00}. 

First let us prove that 
$\youles{\yocover{C}}{R}$ is open. 
Take an arbitrary member 
 $d\in \youles{\yocover{C}}{R}$, 
 and let 
 $r$
  be a Lebesgue number 
  of $\yocover{C}$ with respect to 
  $d$. 
 Fix  
$\epsilon\in (0, \infty)$
with 
$\epsilon <r$. 
For every 
 $e\in \yocmet{X}{R}$ such that 
$\umetdis_{X}(d, e)< \epsilon$, 
 we have 
 $d(x, y)\le e(x, y)\lor \epsilon$, 
 and hence 
$U(x, r; e)\yosub U(x, r; d)$. 
Thus, 
$r$ is also a Lebesgue number 
of $\yocover{C}$
with respect to $e$, 
and hence 
$e\in \youles{\yocover{C}}{R}$. 
Therefore  
$\youles{\yocover{C}}{R}$ 
is open in 
$\yocmet{X}{R}$. 

Next we prove that 
$\youles{\yocover{C}}{R}$  is 
dense in $\yocmet{X}{R}$. 
Using Theorem 
\ref{thm:ules}, we see that 
$\youles{\yocover{C}}{R}\neq \emptyset$. 
Take an arbitrary member 
$d\in \yocmet{X}{R}$ and 
an arbitrary number 
$\epsilon\in (0, \infty)$. 
Fix 
$e\in \youles{\yocover{C}}{R}$, and 
let $r$ be a Lebesgue number of 
$\yocover{C}$  with respect to 
$e$. 
We divide the proof into two parts. 

\yocase{1}{$R$ is characteristic}
In this case, we can take 
$\eta\in R\setminus \{0\}$ such that 
$\eta<\min\{\epsilon, r\}$. 
Put 
$h(x, y)=\min\{e(x, y), \eta\}$. 
Then 
$h\in \yocmet{X}{R}$ and 
$h\in \youles{\yocover{C}}{R}$. 
We
put 
$p=d\lor h\in \yocmet{X}{R}$. 
Let us show $\umetdis_{X}^{R}(d, p)\le \epsilon$. 
For every pair $a, b\in X$, 
we have 
$p(a, b)=d(a, b)\lor h(a, b)\le d(a, b)\lor \eta\le 
d(a, b)\lor \epsilon$. 
We also have 
$d(a, b)\le h(a, b)\le h(a, b)\lor\epsilon$. 
Then 
$\umetdis_{X}^{R}(d, p)\le \epsilon$. 
Since 
$h(a, b)\le p(a, b)$
for all $a, b\in X$, 
we have 
$U(x, \eta; p)
\yosub
U(x, \eta; h)$. 
By the definition of 
$h$, we have 
$h(a, b)<\eta$ 
if and only if 
$e(a, b)<\eta$
for all $a, b\in X$. 
Thus 
$U(x, \eta; h)
=
U(x, \eta; e)$. 
Due to 
$\eta<r$, we have 
$U(x, \eta; e)
\yosub 
U(x, r; e)$. 
Finally, we obtain 
$U(x, \eta; p)
\yosub U(x, r; e)$. 
Hence 
$\eta$ is a Lebesgue number 
of $\yocover{C}$ with respect to 
$p(=d\lor h)$. 
Thus 
$p\in \youles{\yocover{C}}{R}$.

\yocase{2}{$R$ is not characteristic}
Under this assumption, 
we have the inequality
$0<\inf(R\setminus \{0\})$. 
Put 
$\delta=(1/2)\cdot \inf(R\setminus \{0\})$. 
Then  we see that $\delta>0$ and 
$U(x; \delta; d)=\{x\}$. 
Hence 
$\delta$ is 
a Lebesgue number of 
$\yocover{C}$ with respect to 
$d$. 
This means that 
$d\in \youles{\yocover{C}}{R}$. 
Namely, in this case, 
we have 
$\youles{\yocover{C}}{R}=\yocmet{X}{R}$.

Therefore, in any case,   we 
 conclude that the set 
$\youles{\yocover{C}}{R}$ is dense
in $\yocmet{X}{R}$. 
This completes the 
proof of Theorem \ref{thm:mainna00}. 
\end{proof}

\begin{proof}[Proof of Theorem \ref{thm:mainuiset}]
Let $R$ be a range set, 
and 
 $X$ be an $R$-valued  ultrametrizable space. 
Take $w\in \yocpm{X}$. 
For each $n\in \zz_{\ge 0}$, 
put 
$\yocover{O}_{n}=\{\, U(x, 2^{-n}; w)\mid x\in X\, \}$, 
and 
$S=\bigcap_{n\in \zz_{\ge 0}}\youles{\yocover{O}_{n}}{R}$. 
Then, 
similarly to 
 the proof  of Theorem \ref{thm:mainiset}, 
 we
obtain  
$S\yosub \youiset{w}{R}$. 
Since 
every 
ultrametrizable 
 space is 
ultraparacompact
(see \cite[Proposition 1.2 and Corollary 1.4]{MR0261565} and 
\cite[Theorem II]{MR80905}), 
we can apply 
Theorem \ref{thm:mainna00}
to $X$ and each $\yocover{O}_{n}$. 
Then we see that 
each 
$\youles{\yocover{O}_{n}}{R}$ is 
open and dense in 
$\yocmet{X}{R}$. 
Hence $\youiset{w}{R}$ is comeager in 
$\yocmet{X}{R}$. 
This finishes the proof. 
\end{proof}

\begin{proof}[Proof of Theorem \ref{thm:mainnabaire}]
The proof is similar to 
that of Theorem \ref{thm:main1baire}. 
Let 
$R$ be a range set,  and 
$X$
be an $R$-valued   ultrametrizable space. 
Since $X$ is metrizable, 
we obtain $\met{X}\neq \emptyset$. 
Fix $w\in \met{X}$. 
Lemma \ref{lem:ugentop} shows 
that 
$\youiset{w}{R}\yosub \ult{X}{R}$. 
Hence,  
using Theorem \ref{thm:mainuiset},
the set  $\ult{X}{R}$ is comeager in 
$\yocmet{X}{R}$. 
Therefore 
Lemmas  \ref{lem:comeager}
and \ref{lem:daiji}
prove that  
$\ult{X}{R}$ is Baire. 
\end{proof}

\begin{proof}[Proof of Theorem \ref{thm:mainnacomp}]
Let 
$R$ be a range set, and 
$X$ be 
a completely 
 metrizable
 and 
 $R$-valued 
 ultrametrizable
 space. 
 Since $X$ is completely metrizable, 
 we obtain  $\compm{X}\neq \emptyset$. 
 Fix $w\in \compm{X}$. 
In the same way as
the proof of 
Theorem \ref{thm:maincomp}, 
we can prove 
that 
$\youiset{w}{R}\yosub \ucompm{X}{R}$. 
Using
Theorem \ref{thm:mainuiset}, 
we conclude that 
$\ucompm{X}{R}$ is 
comeager in 
$\yocmet{X}{R}$. 
This completes the proof of  Theorem \ref{thm:mainnacomp}. 
\end{proof}

%%%%%%%%%%%%%%%%%%%%%%%%%%%%%
%%%%%%%%%%%%%%%%%%%%%%%%%%%%%
%%%%%%%%%%%%%%%%%%%%%%%%%%%%%
%%%%%%%%%%%%%%%%%%%%%%%%%%%%%
%%%%%%%%%%%%%%%%%%%%%%%%%%%%%
%%%%%%%%%%%%%%%%%%%%%%%%%%%%%
%%%%%%%%%%%%%%%%%%%%%%%%%%%%%

%\subsection{Questions on spaces of metrics}
\subsection{Additional remarks}
As shown in \cite[Proposition 3]{MR4586584}
(see also \cite[Lemma 5.1]{Ishiki2020int}), 
if $X$ is metrizable and $\sigma$-compact, 
then $\met{X}$ is completely metrizable. 
In the first version of the preprint of this paper, 
the author conjectured that 
the  inverse of this result is true. 
Recently,  
Koshino 
solved this conjecture.
Namely, he proved  that, 
for every separable metrizable 
$X$, 
the space
 $\met{X}$ 
 is completely metrizable 
 if and only if 
$X$ is 
$\sigma$-compact
(see \cite[Theorem and Remark]{koshino2024borel}). 
Now we make a question on spaces of metrics. 

\begin{ques}
For a metrizable  sapce $X$, 
is 
$\met{X}$ Borel in 
$\yocpm{X}$?
If this is  the case, 
what is the Borel hierarchy
of 
$\met{X}$ in 
$\yocpm{X}$?
\end{ques}

%%%%%%%%%%%%%%%%%%%%%%%%%%%%%
%%%%%%%%%%%%%%%%%%%%%%%%%%%%%
%%%%%%%%%%%%%%%%%%%%%%%%%%%%%
%%%%%%%%%%%%%%%%%%%%%%%%%%%%%
%%%%%%%%%%%%%%%%%%%%%%%%%%%%%

\begin{ac}
The author wishes to express his 
deepest gratitude 
to 
all  members of 
Photonics Control Technology Team (PCTT) in
 RIKEN, 
where
the majority  of the paper were written, 
for their invaluable   supports. 
Special thanks are extended to 
  the 
Principal Investigator of PCTT,
Satoshi Wada for 
the encouragement and support that 
transcended disciplinary boundaries. 

The author would like to thank 
Katsuhisa Koshino for helpful 
comments. 

This work is partially supported by JSPS 
KAKENHI Grant Number 
JP24KJ0182. 
\end{ac}

%%%%%%%%%%%%%%%%%%%%%%%%%%
%%%%%%%%%%%%%%%%%%%%%%%%%%
%%%%%%%%%%%%%%%%%%%%%%%%%%
%%%%%%%%%%%%%%%%%%%%%%%%%%
%%%%%%%%%%%%%%%%%%%%%%%%%%
%%%%%%%%%%%%%%%%%%%%%%%%%%
%%%%%%%%%%%%%%%%%%%%%%%%%%
%%%%%%%%%%%%%%%%%%%%%%%%%%
%%%%%%%%%%%%%%%%%%%%%%%%%%
%%%%%%%%%%%%%%%%%%%%%%%%%%

%%%%%%%%%%%%%%%%%%%%%%%%%%
%bibliography
%%%%%%%%%%%%%%%%%%%%%%%%%%

%READ!
%myplain is the same to amsplain style. 

%\nocite{*}
\bibliographystyle{myplaindoidoi}
\bibliography{bibtex/compmet.bib}

\providecommand{\bysame}{\leavevmode\hbox to3em{\hrulefill}\thinspace}
\providecommand{\MR}{\relax\ifhmode\unskip\space\fi MR }
% \MRhref is called by the amsart/book/proc definition of \MR.
\providecommand{\MRhref}[2]{%
  \href{http://www.ams.org/mathscinet-getitem?mr=#1}{#2}
}
\providecommand{\href}[2]{#2}
\begin{thebibliography}{10}

\bibitem{baire1899fonctions}
R.~Baire, \emph{Sur les fonctions de variables r{\'e}elles}, Annali di
  Matematica Pura ed Applicata (1898-1922) \textbf{3} (1899), 1--123,
  DOI:{10.1007/BF02419243}.

\bibitem{Banach1931}
S.~Banach, \emph{\"{U}ber die baire'sche kategorie gewisser funktionenmengen},
  Stud. Math. \textbf{3} (1931), no.~1, 174--179, DOI:{10.4064/sm-3-1-174-179}.

\bibitem{vceretkova1997certain}
S.~{\v{C}}eretkov{\'a}, J.~Fulier, and J.~T. T{\'o}th, \emph{On the certain
  subsets of the space of metrics}, Acta Academiae Paedagogicae Agriensis,
  Sectio Mathematicae \textbf{24} (1997), 111--115.

\bibitem{MR80905}
J.~de~Groot, \emph{Non-{A}rchimedean metrics in topology}, Proc. Amer. Math.
  Soc. \textbf{7} (1956), 948--953, DOI:{10.2307/2033568}. \MR{80905}

\bibitem{MR0261565}
R.~L. Ellis, \emph{Extending continuous functions on zero-dimensional spaces},
  Math. Ann. \textbf{186} (1970), 114--122, DOI:{10.1007/BF01350686}.
  \MR{261565}

\bibitem{MR2754373}
S.~Gao and C.~Shao, \emph{Polish ultrametric {U}rysohn spaces and their
  isometry groups}, Topology Appl. \textbf{158} (2011), no.~3, 492--508,
  DOI:{10.1016/j.topol.2010.12.003}. \MR{2754373}

\bibitem{MR3518958}
Gilles Godefroy, \emph{A survey on {L}ipschitz-free {B}anach spaces}, Comment.
  Math. \textbf{55} (2015), no.~2, 89--118, DOI:{10.14708/cm.v55i2.1104}.
  \MR{3518958}

\bibitem{Hausdorff1938}
F.~Hausdorff, \emph{{E}rweiterung einer stetigen {A}bbildung}, Fund. Math.
  \textbf{43} (1938), 40--47, DOI:{10.4064/fm-30-1-40-47}.

\bibitem{MR0431104}
R.~C. Haworth and R.~A. McCoy, \emph{Baire spaces}, Dissertationes Math.
  (Rozprawy Mat.) \textbf{141} (1977), 73, \url{http://eudml.org/doc/268479}.
  \MR{431104}

\bibitem{MR0170323}
J.~R. Isbell, \emph{Uniform spaces}, Mathematical Surveys, vol. No. 12,
  American Mathematical Society, Providence, RI, 1964. \MR{170323}

\bibitem{Ishiki2020int}
Y.~Ishiki, \emph{An interpolation of metrics and spaces of metrics},  (2020),
  preprint, arXiv:2003.13277.

\bibitem{Ishiki2021ultra}
\bysame, \emph{An embedding, an extension, and an interpolation of
  ultrametrics}, $p$-Adic Numbers Ultrametric Anal. Appl. \textbf{13} (2021),
  no.~2, 117--147, DOI:{10.1134/S2070046621020023}. \MR{4265905}

\bibitem{Ishiki2021dense}
\bysame, \emph{On dense subsets in spaces of metrics}, Colloq. Math.
  \textbf{170} (2022), no.~1, 27--39, DOI:{10.4064/cm8580-9-2021}. \MR{4460212}

\bibitem{MR4527953}
\bysame, \emph{Extending proper metrics}, Topology Appl. \textbf{325} (2023),
  Paper No. 108387, 11 pages, DOI:{10.1016/j.topol.2022.108387}. \MR{4527953}

\bibitem{Ishiki2023disco}
\bysame, \emph{On comeager sets of metrics whose ranges are disconnected},
  Topology Appl. \textbf{327} (2023), Paper No. 108442, 10 pages,
  DOI:{10.1016/j.topol.2023.108442}. \MR{4548505}

\bibitem{Ishiki2023sr}
\bysame, \emph{Strongly rigid metrics in spaces of metrics}, Topology Proc.
  \textbf{63} (2024), 125--148, arXiv:2210.02170.

\bibitem{MR0370454}
John~L. Kelley, \emph{General topology}, Graduate Texts in Mathematics, vol.
  No. 27, Springer-Verlag, New York-Berlin, 1975, Reprint of the 1955 edition
  [Van Nostrand, Toronto, Ont.]. \MR{370454}

\bibitem{MR4586584}
K.~Koshino, \emph{Recognizing the topologies of spaces of metrics with the
  topology of uniform convergence}, Bull. Pol. Acad. Sci. Math. \textbf{70}
  (2022), no.~2, 165--171, DOI:{10.4064/ba220523-18-4}. \MR{4586584}

\bibitem{koshino2022topological}
\bysame, \emph{The topological type of spaces consisting of certain metrics on
  locally compact metrizable spaces with the compact-open topology}, arXiv
  preprint arXiv:2202.08615 (2022).

\bibitem{koshino2024borel}
\bysame, \emph{On the {B}orel complexity and the complete metrizability of
  spaces of metrics},  (2024), preprint, arXiv:2403.07421.

\bibitem{Mazurkiewicz1931}
S.~Mazurkiewicz, \emph{Sur les fonctions non d\'{e}rivables}, Stud. Math.
  \textbf{3} (1931), no.~1, 92--94, DOI:{10.4064/sm-3-1-92-94}.

\bibitem{MR0056905}
E.~Michael, \emph{A note on paracompact spaces}, Proc. Amer. Math. Soc.
  \textbf{4} (1953), 831--838, DOI:{10.2307/2032419}. \MR{56905}

\bibitem{NT1928}
V.~Niemytzki and A.~Tychonoff, \emph{{B}eweis des {S}atzes, dass ein
  metrisierbarer {R}aum dann und nur dann kompakt ist, wenn er in jeder
  {M}etrik vollst\"{a}ndig ist}, Fund. Math. \textbf{12} (1928), 118--120,
  DOI:{10.4064/fm-12-1-118-120}.

\bibitem{MR0584443}
J.~C. Oxtoby, \emph{Measure and category}, second ed., Graduate Texts in
  Mathematics, vol.~2.

\bibitem{MR0236876}
M.~E. Rudin, \emph{A new proof that metric spaces are paracompact}, Proc. Amer.
  Math. Soc. \textbf{20} (1969), 603, DOI:{10.2307/2035708}. \MR{236876}

\bibitem{MR3099433}
K.~Sakai, \emph{Geometric aspects of general topology}, Springer Monographs in
  Mathematics, Springer, Tokyo, 2013, DOI:{10.1007/978-4-431-54397-8}.
  \MR{3099433}

\bibitem{MR0010962}
M.~E. Shanks, \emph{The space of metrics on a compact metrizable space}, Amer.
  J. Math. \textbf{66} (1944), 461--469, DOI:{10.2307/2371909}. \MR{10962}

\bibitem{smith2023lipschitz}
R.~J. Smith and F.~Talimdjioski, \emph{Lipschitz-free spaces over properly
  metrisable spaces and approximation properties}, arXiv preprint
  arXiv:2308.14121 (2023).

\bibitem{MR0026802}
A.~H. Stone, \emph{Paracompactness and product spaces}, Bull. Amer. Math. Soc.
  \textbf{54} (1948), 977--982, DOI:{10.1090/S0002-9904-1948-09118-2}.
  \MR{26802}

\bibitem{MR4641123}
Filip Talimdjioski, \emph{Lipschitz-free spaces over {C}antor sets and
  approximation properties}, Mediterr. J. Math. \textbf{20} (2023), no.~6,
  Paper No. 302, 16, DOI:{10.1007/s00009-023-02503-1}. \MR{4641123}

\bibitem{MR0002515}
J.~W. Tukey, \emph{Convergence and {U}niformity in {T}opology}, Annals of
  Mathematics Studies, vol. No. 2, Princeton University Press, Princeton, NJ,
  1940. \MR{2515}

\bibitem{MR1407287}
R.~W. Vallin, \emph{More on the metric space of metrics}, Real Anal. Exchange
  \textbf{21} (1995/96), no.~2, 739--742. \MR{1407287}

\bibitem{MR1691182}
A.~M. Vershik, \emph{The universal {U}ryson space, {G}romov's metric triples,
  and random metrics on the series of natural numbers}, Uspekhi Mat. Nauk
  \textbf{53} (1998), no.~5(323), 57--64,
  DOI:{10.1070/rm1998v053n05ABEH000069}. \MR{1691182}

\bibitem{MR2086637}
\bysame, \emph{Random metric spaces and universality}, Uspekhi Mat. Nauk
  \textbf{59} (2004), no.~2(356), 65--104,
  DOI:{10.1070/RM2004v059n02ABEH000718}. \MR{2086637}

\bibitem{MR1198097}
T.~\v{S}al\'{a}t, J.~T\'{o}th, and L.~Zsilinszky, \emph{On cardinality of sets
  of metrics generating metric spaces of prescribed properties}, Ann. Univ.
  Sci. Budapest. E\"{o}tv\"{o}s Sect. Math. \textbf{35} (1992), 15--21.
  \MR{1198097}

\bibitem{MR1205515}
\bysame, \emph{Metric space of metrics defined on a given set}, Real Anal.
  Exchange \textbf{18} (1992/93), no.~1, 225--231. \MR{1205515}

\bibitem{MR1268864}
\bysame, \emph{On the structure of the space of metrics defined on a given
  set}, Real Anal. Exchange \textbf{19} (1993/94), no.~1, 321--327.
  \MR{1268864}

\bibitem{MR2048350}
S.~Willard, \emph{General topology}, Dover Publications, Inc., Mineola, NY,
  2004, Reprint of the 1970 original [Addison-Wesley, Reading, MA; MR0264581].
  \MR{2048350}

\end{thebibliography}

%%%%%%%%%%%%%%%%

%%%%%%%%%%%%%%%%%%%%%%%%%%%%%%

%%%%%%%%%%%%%%%%%%%%%%%%%%
%%%%%%%%%%%%%%%%%%%%%%%%%%
%%%%%%%%%%%%%%%%%%%%%%%%%%
%%%%%%%%%%%%%%%%%%%%%%%%%%
%%%%%%%%%%%%%%%%%%%%%%%%%%
%%%%%%%%%%%%%%%%%%%%%%%%%%
%%%%%%%%%%%%%%%%%%%%%%%%%%
%%%%%%%%%%%%%%%%%%%%%%%%%%
%%%%%%%%%%%%%%%%%%%%%%%%%%
%%%%%%%%%%%%%%%%%%%%%%%%%%
%%%%%%%%%%%%%%%%%%%%%%%%%%
%end
\end{document}